\theoremstyle{defi}
\newtheorem{thrm}{Theorem}[section]
\newtheorem{cor}[thrm]{Corollary}
\newtheorem{lem}[thrm]{Lemma}
\newtheorem{prop}[thrm]{Proposition}
\numberwithin{equation}{section}
\DeclareMathOperator{\rad}{rad}
\DeclareMathOperator{\soc}{soc}
\DeclareMathOperator{\tr}{tr}
\DeclareMathOperator{\spn}{span}
\newcommand{\R}{\mathrm{rank}^{\sigma}}
\begin{document}

\title[Banach algebra mappings preserving the invertibility of linear pencils]{Banach algebra mappings preserving the invertibility of linear pencils}

\author{Francois Schulz}
\address{Department of Mathematics and Applied Mathematics, Faculty of Science, University of Johannesburg,
P.O. Box 524, Auckland Park, 2006, South Africa}
\email{francoiss@uj.ac.za}

\subjclass[2020]{46H05; 46H10; 47A05; 47A10}
\keywords{Banach algebra, rank, trace, invertibility preserving mappings, Jordan isomorphism}

\date{\today}

\begin{abstract}
Let $A$ and $B$ be complex unital Banach algebras, and let $\varphi, \psi: A \to B$ be surjective mappings. If $A$ is semisimple with an essential socle and $\varphi$ and $\psi$ preserves the invertibility of linear pencils in both directions, that is, for any $x, y \in A$ and $\lambda \in \mathbb{C}$, $\lambda x+y$ is invertible in $A$ if and only if $\lambda \varphi(x) + \psi(y)$ is invertible in $B$, then we show that there exists an invertible element $u$ in $B$ and a
Jordan isomorphism $J: A \to B$ such that $\varphi(x) = \psi(x) = uJ(x)$ for all $x \in A$.
\end{abstract}

\maketitle

\section{Introduction}\label{sect:intro}

\noindent

Throughout $A$ and $B$ shall denote complex unital Banach algebras, where we will always use $\mathbf{1}$ to denote the identity of an algebra under consideration. The group of invertible elements of a Banach algebra $A$ shall be denoted by $G(A)$. For any element $x$ in a unital Banach algebra $A$, recall that its spectrum is given by $$\sigma(x) = \{\lambda \in \mathbb{C}: \lambda \mathbf{1} - x \notin G(A)\}.$$ 
Moreover, we use $\sigma'(x) = \sigma(x)\setminus\{0\}$ and $\rho(x) = \sup_{\lambda \in \sigma(x)}|\lambda|$ to denote its nonzero spectrum and spectral radius, respectively. For the Jacobson radical of a Banach algebra $A$ we shall write $\rad(A)$. We say that $A$ is semisimple if $\rad(A) = \{0\}$. If $A$ is semisimple, then it has a smallest two-sided ideal containing all minimal left and right ideals, called the socle of $A$ which we denote by $\soc(A)$. Lastly we mention that if $A$ is semisimple, then $\soc(A)$ is essential if and only if its left annihilator is trivial.

The study of linear mappings preserving the invertibility or eigenvalues of matrices can be traced back to Dieudonn\'{e} in \cite{dieudonne1948generalisation}. Let $M_n(\mathbb{C})$ denote the Banach algebra of all complex $n \times n$ matrices. Building on the work of Dieudonn\'{e}, Marcus and Purves showed in \cite{marcus1959linear} that if a linear map $\varphi: M_n(\mathbb{C}) \to M_n(\mathbb{C})$ preserves invertibility (that is, $x \in G(M_n(\mathbb{C})) \implies \varphi(x) \in G(M_n(\mathbb{C}))$), then there exist $u, v \in G(M_n(\mathbb{C}))$ such that
\begin{equation}
	\varphi(x) = uxv \mbox{ for all } x \in M_n(\mathbb{C})\;\; \mbox{ or }\;\; \varphi(x) = ux^tv \mbox{ for all } x \in M_n(\mathbb{C}),
	\label{eqform}
\end{equation}
where $x^t$ denotes the transpose of $x$. Of course, if the mapping $\varphi$ above is unital, that is, $\varphi(\mathbf{1}) = \mathbf{1}$, then $u = v^{-1}$. Notice also that one does not need to assume here that $\varphi$ is surjective (since injectivity can be established from the hypotheses). Motivated by the preceding result of Marcus and Purves, and by Kaplansky's famous problem, Sourour classified in \cite{Sourour1996} the form of any bijective linear invertibility preserving map between $\mathcal{L}(X)$ and $\mathcal{L}(Y)$, where $\mathcal{L}(X)$ and $\mathcal{L}(Y)$ are the Banach algebras of all bounded linear operators acting on the Banach spaces $X$ and $Y$, respectively. This was further generalized by Bre\v{s}ar, Fo\v{s}ner, and \v{S}emrl in \cite{bfs2003}, where it is shown that if $\varphi: A \to B$ is a unital bijective linear mapping that preserves invertibility, with $A$ and $B$ semisimple and $\soc(A)$ essential, it follows that $\varphi$ is a Jordan isomorphism; that is, a linear bijective map satisfying $\varphi(x^2) = \varphi(x)^2$ for all $x \in A$. In more recent years there has been much interest in general preserver problems, where the condition of linearity is removed from the hypotheses and the preserving condition is replaced by something which can potentially be used to recover linearity (or semi-linearity) in the conclusion. For example, it is easy to see that a linear spectrum preserving map $\varphi:A \to B$ has the property that $\sigma(\lambda x + y) = \sigma(\lambda \varphi(x) + \varphi(y))$ for any $x, y \in A$ and $\lambda \in \mathbb{C}$. However, if $\varphi$ is not assumed to be linear, can one recover linearity from the preserving property alone? For recent work in this direction see \cite{Askes2022} and \cite{Benjamin2023}.

In this note we shall study pairs of surjective mappings $\varphi, \psi: A \to B$ preserving the invertibility of linear pencils in both directions, that is, which satisfy the property that
\begin{equation}
	\lambda x+y \in G(A) \iff \lambda \varphi(x) + \psi(y) \in G(B) \mbox{ for all }x, y \in A \mbox{ and }\lambda \in \mathbb{C}.
	\label{eqcond}
\end{equation}
Our work is motivated by a recent paper of Costara, \cite{COSTARA2020216}, which generalizes a result on determinant preserving mappings by Dolinar and \v{S}emrl in \cite{DOLINAR2002189}, and deals with our situation in the setting of complex matrices, i.e. the case where $\varphi, \psi$ satisfy \eqref{eqcond} and $A = B = M_n(\mathbb{C})$. Remarkably, if only \textit{one} of the mappings in this case is either surjective or continuous, then \cite[Theorem 1]{COSTARA2020216} says that $\varphi = \psi$ and $\varphi$ takes one of the forms in \eqref{eqform}. Our aim here is to show that if a pair of surjective mappings $\varphi, \psi: A \to B$ satisfy \eqref{eqcond}, where  $A$ is semisimple with $\soc(A)$ essential, then $\varphi = \psi$ and $\varphi: x \mapsto uJ(x)$, where $u \in G(B)$ and $J: A \to B$ is a Jordan isomorphism. Of course, since our consideration includes infinite-dimensional Banach algebras (in particular, $\mathcal{L}(X)$ with $X$ of any dimension), and since we do not place any restrictions on $B$, our assumption that \textit{both} mappings are surjective seems reasonable.

\section{Variation of elements in terms of invertibility}

We firstly prove a few preliminary results concerning the radical and the variation of elements in terms of invertibility in a semisimple Banach algebra. These are similar to some of the results in \cite{COSTARA2020216} and \cite{Havlicek2006}, which were obtained for complex matrices and bounded linear operators on a Hilbert space. However, our proofs use contemporary techniques from spectral theory in abstract Banach algebras and, in this generality, some of our results seem to be new. The first result is probably well-known, but it will be useful in sequel.

\begin{prop}\label{prop1}
	For any $x \in A$ it follows that
	$$x \in \rad(A) \iff x + y \in G(A) \mbox{ for all } y \in G(A).$$
\end{prop}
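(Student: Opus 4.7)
For the forward implication the plan is to exploit that $\rad(A)$ is a two-sided ideal consisting of quasi-regular elements. Given $x \in \rad(A)$ and $y \in G(A)$, the element $y^{-1}x$ again lies in $\rad(A)$, hence $\mathbf{1} + y^{-1}x \in G(A)$ by the standard characterization of the Jacobson radical. Writing $x + y = y(\mathbf{1} + y^{-1}x)$ then yields $x + y \in G(A)$.

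For the converse the plan is to establish the equivalent condition $\mathbf{1} + ax \in G(A)$ for every $a \in A$, which forces $x$ into the radical. The first step is almost immediate: for $a \in G(A)$, applying the hypothesis with $y = a^{-1} \in G(A)$ gives $x + a^{-1} \in G(A)$, and left-multiplying by $a$ produces $\mathbf{1} + ax \in G(A)$. The main obstacle is then to pass from $a \in G(A)$ to an arbitrary $a \in A$. Since $G(A)$ need not be dense in $A$ (for example in $\mathcal{L}(X)$ when $X$ is infinite dimensional), no topological approximation is available, and an algebraic device is required.

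To close this gap the plan is to use a short factorization. Given $a \in A$, choose $\mu \in \mathbb{C}$ with $|\mu| > \rho(a)$, so that $\mu \neq 0$ and $b := a - \mu\mathbf{1} \in G(A)$. The first step then produces the invertible element $\mathbf{1} + bx$, and since $c := \mu(\mathbf{1} + bx)^{-1}$ is a nonzero scalar multiple of an invertible, it also lies in $G(A)$; the first step therefore applies again to give $\mathbf{1} + cx \in G(A)$. A direct verification confirms the identity
$$
(\mathbf{1} + bx)(\mathbf{1} + cx) \;=\; \mathbf{1} + bx + \mu x \;=\; \mathbf{1} + ax,
$$
so $\mathbf{1} + ax \in G(A)$ for all $a \in A$. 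Replacing $a$ by $-a$ yields $\mathbf{1} - ax \in G(A)$ for every $a \in A$, whence $x \in \rad(A)$ by the standard characterization of the Jacobson radical.
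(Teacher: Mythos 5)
Your proof is correct, but the converse direction follows a genuinely different and more elementary route than the paper's. After reducing, exactly as you do, to showing $\mathbf{1}+ax\in G(A)$ for all $a\in A$ (which is immediate for $a\in G(A)$ by taking $y=a^{-1}$), the paper passes to the spectral statement $\sigma(yx)=\{0\}$ for all $y\in G(A)$ and then invokes heavy machinery -- Aupetit's Scarcity Theorem applied to the analytic function $\lambda\mapsto(\lambda\mathbf{1}+w)x$ -- to propagate the degeneracy of the spectrum from the region $|\lambda|>\rho(w)$ to $\lambda=0$, concluding $\sigma(wx)=\{0\}$ for every $w\in A$. You instead close the gap between $G(A)$ and $A$ by a purely algebraic factorization: with $b=a-\mu\mathbf{1}\in G(A)$ for $|\mu|>\rho(a)$ and $c=\mu(\mathbf{1}+bx)^{-1}\in G(A)$, the identity $(\mathbf{1}+bx)(\mathbf{1}+cx)=\mathbf{1}+bx+(\mathbf{1}+bx)cx=\mathbf{1}+bx+\mu x=\mathbf{1}+ax$ exhibits $\mathbf{1}+ax$ as a product of two invertibles (I checked the noncommutative expansion; it is valid because the cross terms group as $(\mathbf{1}+bx)c\,x=\mu x$). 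Your argument's only analytic input is the boundedness of the spectrum, needed to choose $\mu$, whereas the paper leans on subharmonicity results; what the paper's method buys in exchange is a template that reappears in Theorem~\ref{thrm1}, where the same scarcity/Liouville technique is genuinely needed and no algebraic shortcut of your kind is available. Both proofs finish by the same standard characterization of $\rad(A)$, and your forward implication coincides with the paper's.
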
 

\begin{proof}
	The forward implication follows from \cite[Theorem 3.1.3]{aupetit1991primer}, which also implies that
	$$x \in \rad(A) \iff \sigma(yx) = \{0\} \mbox{ for all }y \in A.$$
	Conversely, if $y + x \in G(A)$ for all $y \in G(A)$, then $\sigma(yx) = \{0\}$ for all $y \in G(A)$. Let $w \in A$ be given. Then $\sigma((\lambda\mathbf{1}+w)x) = \{0\}$  for all $\lambda \in \mathbb{C}$ with $|\lambda| > \rho(w)$. If we now take $f: \mathbb{C} \to A$ to be the entire function defined by $f(\lambda) = (\lambda\mathbf{1}+w)x$ for each $\lambda \in \mathbb{C}$, then
	by Aupetit's Scarcity Theorem \cite[Theorem 3.4.25]{aupetit1991primer} and \cite[Corollary 3.4.18]{aupetit1991primer} we infer that $\sigma(f(\lambda)) = \{0\}$ for all $\lambda \in \mathbb{C}$. Taking $\lambda = 0$ allows us to conclude that $\sigma(wx) = \{0\}$ for all $w \in A$. Hence, $x \in \rad(A)$.
\end{proof}

The next result is reminiscent of \cite[Lemma 1]{COSTARA2020216} for matrices.

\begin{prop}\label{prop2}
	Suppose that $A$ is semisimple. For any $a, b \in A$, if 
	$$x+a \in G(A) \iff x + b \in G(A) \mbox{ for all }x \in A,$$
	then $a = b$.
\end{prop}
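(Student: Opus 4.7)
The plan is to reduce the statement to a direct application of Proposition \ref{prop1}. Write $c = b - a$, so that the desired conclusion $a = b$ is equivalent to $c = 0$, and by semisimplicity of $A$ this is in turn equivalent to showing $c \in \rad(A)$.

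To invoke Proposition \ref{prop1} I would show that $y + c \in G(A)$ for every $y \in G(A)$. Substituting $x = y - a$ in the hypothesis (for arbitrary $y \in A$), the given equivalence rewrites as
$$y \in G(A) \iff y + c \in G(A) \mbox{ for all } y \in A.$$
Restricting to $y \in G(A)$, the forward implication yields $y + c \in G(A)$ for every $y \in G(A)$, which is precisely the condition appearing in the converse direction of Proposition \ref{prop1}. Hence $c \in \rad(A)$, and semisimplicity of $A$ then forces $c = 0$, i.e., $a = b$.

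There is essentially no obstacle here: the argument reduces to a single change of variable followed by a direct appeal to the previous proposition. The only point worth noting is that the map $x \mapsto x - a$ is a bijection of $A$ onto itself, so the rewritten equivalence indeed holds for \emph{all} $y \in A$ and can be specialised to arbitrary $y \in G(A)$.
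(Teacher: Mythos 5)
Your proof is correct, but it takes a genuinely different route from the paper. The paper replaces $x$ by $-\lambda\mathbf{1}+x$ to upgrade the hypothesis to the spectral identity $\sigma(x+a)=\sigma(x+b)$ for all $x\in A$, and then invokes an external uniqueness-under-spectral-variation theorem (from Braatvedt--Brits or Schulz--Brits) to conclude $a=b$. You instead translate by $a$, observe that the forward implication of the rewritten equivalence gives $y+(b-a)\in G(A)$ for every $y\in G(A)$, and then apply the converse direction of Proposition~\ref{prop1} to place $b-a$ in $\rad(A)=\{0\}$. Your argument is more self-contained: it uses only one direction of the hypothesis and only a result already proved in the paper, avoiding the heavier spectral-variation machinery entirely (though Proposition~\ref{prop1} itself rests on the Scarcity Theorem, that cost is already paid). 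The paper's route, by contrast, passes through the stronger intermediate fact $\sigma(x+a)=\sigma(x+b)$, which is of independent interest but is not needed for the conclusion. Both proofs are valid; yours is the more economical given the structure of the paper.
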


\begin{proof}
	By replacing $x$ by $-\lambda\mathbf{1}+x$, the hypothesis implies that
	$$\sigma(x+a) = \sigma(x+b) \mbox{ for all }x \in A.$$
	Thus, since $A$ is semisimple, we can apply either \cite[Theorem 2.2]{BraatBrit} or \cite[Theorem 2.1]{SCHULZ20161626} to infer that $a = b$.
\end{proof}

It turns out that it is enough to restrict $x$ in Proposition~\ref{prop2} to the set $G(A)$. For operators, a similar result can be found in \cite[Lemma 3.2]{Havlicek2006}. To prove the result for semisimple Banach algebras in general, we closely follow an argument which appears in the proof of \cite[Theorem 2.6]{BraatBrit}.

\begin{thrm}\label{thrm1}
	Let $A$ be semisimple with $a, b \in A$. Assume that for every $x \in G(A)$ we have that
	$x+a \in G(A) \iff x + b \in G(A)$.
	Then $a = b$.
\end{thrm}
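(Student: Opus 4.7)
The plan is to reduce the theorem to Proposition~\ref{prop2}. That proposition forces $a=b$ provided one knows that $x+a\in G(A) \iff x+b\in G(A)$ for \emph{every} $x\in A$, so the task is simply to lift the ``$x\in G(A)$'' clause in the hypothesis to ``$x\in A$''. Equivalently, in spectral language, the task is to show that $\sigma(y+a)=\sigma(y+b)$ for every $y\in A$, from which Proposition~\ref{prop2} concludes $a=b$.

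The first step is a translation trick. Given any $y\in A$ and $\lambda\in\mathbb{C}$ with $-\lambda\notin\sigma(y)$, the element $\lambda\mathbf{1}+y$ lies in $G(A)$, so by hypothesis $\lambda\mathbf{1}+y+a\in G(A) \iff \lambda\mathbf{1}+y+b\in G(A)$. Rewriting this in spectral terms shows that for every $y\in A$,
\[
\sigma(y+a)\cap\bigl(\mathbb{C}\setminus\sigma(y)\bigr) \;=\; \sigma(y+b)\cap\bigl(\mathbb{C}\setminus\sigma(y)\bigr).
\]
So $\sigma(y+a)$ and $\sigma(y+b)$ already agree outside the compact exceptional set $\sigma(y)$.

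The core step is to close the remaining gap inside $\sigma(y)$. For fixed $y\in A$, I would work with the $A$-valued holomorphic function
\[
\phi(\lambda)\;=\;(\lambda\mathbf{1}+y+b)^{-1}(a-b) \qquad \bigl(\lambda\in\Omega:=\mathbb{C}\setminus(-\sigma(y+b))\bigr).
\]
The factorisation $\lambda\mathbf{1}+y+a=(\lambda\mathbf{1}+y+b)(\mathbf{1}+\phi(\lambda))$ gives
\[
\lambda\mathbf{1}+y+a\in G(A) \iff \mathbf{1}+\phi(\lambda)\in G(A) \iff -1\notin\sigma(\phi(\lambda)).
\]
The previous paragraph yields $-1\notin\sigma(\phi(\lambda))$ for every $\lambda\in\Omega\setminus(-\sigma(y))$, and since $\phi(\lambda)\to 0$ as $|\lambda|\to\infty$, one also has $-1\notin\sigma(\phi(\lambda))$ outside a large disc. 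In the spirit of the proof of Proposition~\ref{prop1}, I intend to combine Aupetit's scarcity theorem \cite[Theorem~3.4.25]{aupetit1991primer} with \cite[Corollary~3.4.18]{aupetit1991primer} and the subharmonicity of $\lambda\mapsto\rho(\phi(\lambda))$ to extend $-1\notin\sigma(\phi(\lambda))$ to every $\lambda\in\Omega$. This gives $\sigma(y+a)\subseteq\sigma(y+b)$; swapping the roles of $a$ and $b$ yields the reverse inclusion, and hence the required spectral equality. Proposition~\ref{prop2} then delivers $a=b$.

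The main obstacle is precisely this analytic extension. The open set $\Omega$ need not be connected when $-\sigma(y+b)$ has bounded complementary components, and $\phi(\lambda)$ can blow up as $\lambda$ approaches the boundary $\partial\Omega$, so naive maximum-principle reasoning on the subharmonic function $\rho\circ\phi$ is not enough. The scarcity argument must be set up with care to rule out isolated ``interior'' points where $-1$ could re-enter the spectrum of $\phi(\lambda)$; once this is done, all remaining pieces are routine given the preliminary propositions already in hand.
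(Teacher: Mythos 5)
Your first step is correct, and it is in fact a slight sharpening of the estimate the paper starts from: taking $x=-\mu\mathbf{1}+y$ in the hypothesis shows that $\sigma(y+a)$ and $\sigma(y+b)$ agree off $\sigma(y)$, whereas the paper's \eqref{E1} only records this for $|\mu|>\rho(y)$. The problem is the core step. The set on which you do \emph{not} know $-1\notin\sigma(\phi(\lambda))$ is $\Omega\cap(-\sigma(y))$, and $\sigma(y)$ can be a very fat set --- for $y$ a unilateral shift on $\ell^2$ it is the whole closed unit disc --- so there is no reason for this exceptional set to consist of isolated points, or even to have capacity zero. More fundamentally, even when the exceptional set is a single point, the property you want to propagate does not propagate: for an analytic $A$-valued function $\phi$, the statement ``$-1\notin\sigma(\phi(\lambda))$'', i.e.\ invertibility of $\mathbf{1}+\phi(\lambda)$, is not preserved across isolated exceptional points (take $A=\mathbb{C}$ and $\phi(\lambda)=\lambda$: then $-1\notin\sigma(\phi(\lambda))$ for every $\lambda\neq -1$ but not at $\lambda=-1$). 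The scarcity theorem and \cite[Corollary 3.4.18]{aupetit1991primer} propagate hypotheses that control the whole of $\sigma(\phi(\lambda))$ (its cardinality, or its containment in a fixed closed set) from a set of positive capacity to the whole domain; they say nothing about the avoidance of a single value by a spectrum that is otherwise unconstrained. So the analytic extension you defer to the end is not a technicality to be ``set up with care'' --- it is the whole difficulty, and I do not see how to carry it out along these lines. (The intended conclusion $\sigma(y+a)=\sigma(y+b)$ for all $y$ is of course true a posteriori, but only because $a=b$.)

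The paper's proof takes a genuinely different route that never needs to look inside $\sigma(y)$. Starting from the weak form of your first step, it feeds elements of the form $n(b-a)+q$ and $nb-(n-1)a+q$ (with $q$ quasinilpotent) back into the hypothesis and proves by induction the uniform bound $\rho(n(b-a)+q)\leq\|b\|+\|q\|$; replacing $q$ by $nq$, dividing by $n$ and letting $n\to\infty$ gives $\rho(\lambda(b-a)+q)\leq\|q\|$ for all $\lambda\in\mathbb{C}$, whence Vesentini's theorem and Liouville's spectral theorem force $\rho((b-a)+q)=\rho(q)=0$ for every quasinilpotent $q$, and Zem\'anek's characterization of the radical gives $b-a\in\rad(A)=\{0\}$. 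If you want to rescue your strategy, you would need a global growth or subharmonicity argument of this kind; a pointwise analytic continuation of the non-membership of $-1$ in $\sigma(\phi(\lambda))$ will not do it.
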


\begin{proof}
	Let $x \in A$ and $\lambda \in \mathbb{C}$ with $|\lambda|> \rho(x)$ be given. Then $-\lambda\mathbf{1}+x \in G(A)$. Thus, the hypothesis implies that
	$$\lambda \mathbf{1} - (x+a) \in G(A) \iff \lambda \mathbf{1} - (x+b) \in G(A).$$
	Consequently, for any $x \in A$ and $\lambda \in \mathbb{C}$, we have that
	\begin{equation}
		|\lambda| > \rho(x) \implies \left[\lambda \in \sigma(x+a) \iff \lambda \in \sigma(x+b) \right].
		\label{E1}
	\end{equation}
	Take any quasinilpotent $q \in A$ (i.e. $q \in A$ with $\rho(q) = 0$). We claim that
	\begin{equation}
		\max\{\rho(n(b-a)+q), \rho(nb-(n-1)a + q)\} \leq \|b\|+\|q\|
		\label{E2}
	\end{equation}
for all $n \in \mathbb{N}$. We proceed by induction on $n$. Certainly,
$$\rho(b+q) \leq \|b+q\| \leq \|b\|+\|q\|.$$
Moreover, by \eqref{E1} with $x = -b-q$ we have
$$ \rho(b-a+q) = \rho(a-b-q) \leq \rho(-b-q) = \rho(b+q),$$
which establishes the base case. Assume now that \eqref{E2} holds for $n = k$. Taking $x = k(b-a)+q$ in \eqref{E1} then yields
$$\rho((k+1)b-ka+q) \leq \|b\|+\|q\|.$$
Moreover, taking $x = -\left((k+1)b-ka+q\right)$ in \eqref{E1} now gives
 $$ \rho((k+1)(b-a)+q) = \rho((k+1)(a-b)-q) \leq \|b\|+\|q\|.$$
 By induction, this then proves our claim that \eqref{E2} holds for all $n \in \mathbb{N}$. For any $\lambda \in \mathbb{C}$, the hypothesis allows us to replace $a$ and $b$ in \eqref{E1} by $\lambda a$ and $\lambda b$. Moreover, recall that $q \in A$ was an arbitrary quasinilpotent element, and notice that $nq$ is quasinilpotent for all $n \in \mathbb{N}$. Consequently, we conclude that
 \begin{equation}
 	\rho(n\lambda(b-a)+nq) \leq \|\lambda b\| + \|nq\| \mbox{ for all }n \in \mathbb{N} \mbox{ and }\lambda \in \mathbb{C}.
 	\label{E3}
 \end{equation}
If we divide \eqref{E3} throughout by $n$ and then let $n \rightarrow \infty$, we obtain that
\begin{equation}
	\rho(\lambda(b-a)+q) \leq \|q\| \mbox{ for all }\lambda \in \mathbb{C}.
	\label{E4} 
\end{equation}
By Vesentini's Theorem \cite[Theorem 3.4.7]{aupetit1991primer} it follows that the map $\lambda \mapsto \rho(\lambda(b-a)+q)$ is subharmonic on $\mathbb{C}$. Since this map is bounded by \eqref{E4}, it is constant by Liouville's Spectral Theorem \cite[Theorem 3.4.14]{aupetit1991primer}. Taking $\lambda = 0$ and $\lambda = 1$ then yields 
$$\rho((b-a)+q) = \rho(q) = 0.$$ 
Since the above holds for any quasinilpotent element $q \in A$, we can now use Zem\'{a}nek's characterization of the radical \cite[Theorem 5.3.1]{aupetit1991primer} to conclude that $b-a \in \rad(A) = \{0\}$.
\end{proof}

If we assume in addition that $a, b \in G(A)$ and that $\soc(A)$ is an essential ideal, then it is possible to restrict $x$ in Proposition~\ref{prop2} to the set of (spectrally) rank one elements of $A$, which we will denote by $\mathscr{F}_1(A)$. Firstly, however, we digress to give a brief description of the rank and trace in the setting of a semisimple Banach algebra.

The \emph{spectral rank} of an element $ a\in A $, is given by
$$\R(a)=\sup_{x\in A}\#\sigma'(xa)\leq \infty,$$
where $\# K$ denotes the number of distinct elements in the set $K$. From the definition it is clear that $\R(ua) = \R(a)$ for each $u \in G(A)$. Moreover, observe that $x \in \rad(A)$ if and only if $\R(x) = 0$. We also note that this definition of rank coincides with the classical operator rank when $ A=\mathcal{L}(X)$. In particular, if $A$ is semisimple, Aupetit and Mouton have shown that $\soc(A)$ is equal to the set of all finite-rank elements of $A$; in fact, $\soc(A)=\spn(\mathscr{F}_1(A))$ (see \cite[Corollary 2.9]{aupetitmoutontrace} and its proof). If $a\in \soc(A)$, then {\cite[Theorem 2.2]{aupetitmoutontrace}} says that the set
$$E(a):=\left\{x\in A: \#\sigma'(xa)=\R(a)\right\}$$
is dense and open in $A$. We say that $a \in \soc(A)$ is a \emph{maximal finite-rank} element if and only if $\mathbf{1}\in E(a)$. If $a$ is a maximal finite-rank element, then by \cite[Theorem 2.8]{aupetitmoutontrace} it can be expressed as a linear combination of mutually orthogonal rank one idempotents, where the scalar coefficients are distinct and exhaust all the values in $\sigma'(a)$. The notion of the \emph{trace} in the general setting of a Banach algebra was firstly studied together with the rank in \cite{aupetitmoutontrace}. It turns out that the trace will be a useful tool in our work. For $a \in \soc(A)$, we define the trace of $a$ by
$$\tr(a) = \sum_{\lambda \in \sigma (a)} \lambda m\left(\lambda, a\right),$$
where $m(\lambda,a)$ is the \emph{multiplicity} of $a$ at $\lambda$. For more details on the multiplicity, see \cite[Theorem 2.4]{aupetitmoutontrace} and the accompanying discussion. Importantly, we mention here that if $a \in \mathscr{F}_1(A)$, then
\begin{equation}
\tr(a) = \left\{\begin{array}{ll} 0 & \mbox{if }\sigma(a) = \{0\}, \\
	\lambda & \mbox{if } \sigma'(a) = \{\lambda\}.
\end{array} \right.
\label{eqa}
\end{equation}
The (generalized) trace retains many of its classical properties; for example, $ a\mapsto \tr(a) $ is a linear functional on $ \soc(A) $ (\cite[Theorem 3.3]{aupetitmoutontrace} and \cite[Lemma 2.1]{tracesocleident}) and it satisfies the well-known \emph{trace property}, that is, $\tr(ab)=\tr(ba)$ for any $a\in \soc(A) $ and $b \in A$ (\cite[Theorem 2.4]{bbs151}). From the linearity of the trace and the preceding discussion, if $a \in \soc(A)$ is a maximal finite-rank element, then
\begin{equation}
\tr(a) = \sum_{\lambda \in \sigma'(a)} \lambda.
\label{eqb}
\end{equation}
Although the trace is not continuous in general, it does follow from \cite[Theorem 3.3]{aupetitmoutontrace} that for any fixed $n \in \mathbb{N}$, the trace is continuous on the set of elements with rank at most $n$. Finally, we highlight the following important property of the trace which will be useful in our work:
\begin{thrm}[{\cite[Corollary 3.6]{aupetitmoutontrace}}]\label{trprop}
	For any $x \in A$, if $\tr(xa) = 0$ for each $a \in \soc(A)$, then $x\soc(A) = \{0\}$. Moreover, if $x \in \soc(A)$, then $\tr(xa) = 0$ for each $a \in \soc(A)$ implies that $x = 0$.
\end{thrm}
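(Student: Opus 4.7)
My plan is to reduce both claims to a common pattern: apply the trace property (which allows cyclic rotation provided one factor lies in $\soc(A)$), combine it with the explicit rank-one trace formula \eqref{eqa}, and finish via the characterisations of the radical already used in Proposition~\ref{prop1} and Theorem~\ref{thrm1}.

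For the first assertion, since $\soc(A)=\spn(\mathscr{F}_1(A))$, it suffices to show $xp=0$ for every $p\in\mathscr{F}_1(A)$. Fix such a $p$ and any $y\in A$. Because $\soc(A)$ is a two-sided ideal, both $xp$ and $py$ lie in $\soc(A)$, so the trace property and the hypothesis give
\[
\tr(yxp)=\tr(xpy)=\tr(x(py))=0.
\]
Next, $yxp=(yx)p$ has rank at most $\R(p)=1$, so either $yxp\in\rad(A)=\{0\}$ (when the rank is $0$, using semisimplicity) or $yxp\in\mathscr{F}_1(A)$, in which case \eqref{eqa} together with $\tr(yxp)=0$ forces $\sigma(yxp)=\{0\}$. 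In either situation $\rho(yxp)=0$, and since $y$ was arbitrary, Zem\'anek's characterisation of the radical \cite[Theorem 5.3.1]{aupetit1991primer} yields $xp\in\rad(A)=\{0\}$.

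For the second assertion, suppose for contradiction that $x\in\soc(A)$ is nonzero. Semisimplicity gives $\R(x)=n\ge 1$, so there exists $y\in A$ with $\#\sigma'(yx)=n$, i.e. $yx$ is a maximal finite-rank element. By \cite[Theorem 2.8]{aupetitmoutontrace}, $yx=\sum_{i=1}^{n}\lambda_i p_i$ where the $\lambda_i$ are the distinct nonzero elements of $\sigma'(yx)$ and the $p_i$ are mutually orthogonal rank-one idempotents. For any fixed index $j$, orthogonality yields $p_j yx=\lambda_j p_j$, so $\tr(p_j yx)=\lambda_j\tr(p_j)=\lambda_j$, since $\tr(p_j)=1$ by \eqref{eqa} applied to the rank-one idempotent $p_j$. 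On the other hand $p_j y\in\soc(A)$, so the trace property and the hypothesis give $\tr(p_j yx)=\tr(x(p_j y))=0$, contradicting $\lambda_j\neq 0$.

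The most delicate point I expect to need care is the rank estimate $\R(ab)\le \R(b)$ invoked in the first assertion; this follows from $\#\sigma'(u\cdot ab)=\#\sigma'((ua)\cdot b)\le \R(b)$ via the cyclic invariance of the nonzero spectrum. Once that is in place, the entire argument reduces to careful bookkeeping of when each trace-property move is legitimate, with the Aupetit--Mouton structure theorem for maximal finite-rank elements and Zem\'anek's characterisation of the radical doing the substantive work.
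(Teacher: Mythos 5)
Your argument is correct. Note first that the paper does not prove this statement at all --- it is imported verbatim as \cite[Corollary 3.6]{aupetitmoutontrace} --- so there is no internal proof to compare against; what you have produced is a self-contained derivation from the toolkit the paper lists (the trace property, the rank-one trace formula \eqref{eqa}, the diagonalization of maximal finite-rank elements from \cite[Theorem 2.8]{aupetitmoutontrace}), and every substantive step checks out: the reduction to $xp=0$ for $p\in\mathscr{F}_1(A)$ is legitimate by linearity, the submultiplicativity $\R((yx)p)\le\R(p)$ needs only associativity of the product inside $\sigma'$, the trace-property rotations are each applied with one factor in the socle, and in the second part the identity $p_j(yx)=\lambda_j p_j$ together with $\tr(p_j)=1$ does produce the contradiction. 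The one blemish is the final appeal in the first part: having shown $\rho(y(xp))=0$ for all $y\in A$, the fact you need is the multiplicative spectral characterization of the radical, $c\in\rad(A)\iff\sigma(yc)=\{0\}$ for all $y$ (quoted in the paper's proof of Proposition~\ref{prop1} from \cite[Theorem 3.1.3]{aupetit1991primer}), or equivalently the paper's remark that $\R(c)=0\iff c\in\rad(A)$; Zem\'anek's theorem \cite[Theorem 5.3.1]{aupetit1991primer} concerns additive perturbation by quasinilpotents and is not the right citation, though the step itself is valid. A small stylistic alternative: once the first assertion is proved, the second follows without the maximal-finite-rank machinery, since $x\in\soc(A)$ and $x\soc(A)=\{0\}$ give $(Ax)^2=A(xAx)=\{0\}$, and a semisimple algebra has no nonzero nilpotent left ideals.
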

We are now in a position to extend \cite[Lemma 3.3]{Havlicek2006} for Hilbert space operators to semisimple Banach algebras with essential socles.

\begin{prop}\label{prop3}
	Let $A$ be semisimple and suppose that $\soc(A)$ is an essential ideal. If $a, b \in G(A)$ and
	$$x+a \in G(A) \iff x+b \in G(A) \mbox{ for all } x \in \mathscr{F}_1(A),$$
	then $a = b$.
\end{prop}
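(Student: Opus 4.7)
The plan is to leverage the fact that $a$ and $b$ are now assumed invertible in order to convert the invertibility hypothesis on rank one elements into a spectral equality, which the trace machinery together with the essential socle condition can then collapse to $a = b$.

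First I would note that for $x \in \mathscr{F}_1(A)$, since $a \in G(A)$ we have the factorization $x+a = a(\mathbf{1}+a^{-1}x)$, so $x+a \in G(A)$ if and only if $-1 \notin \sigma(a^{-1}x)$, and analogously for $b$. Now $\mathscr{F}_1(A)$ is closed under multiplication by nonzero scalars (since $\R(\lambda x) = \R(x)$ when $\lambda \neq 0$), so the hypothesis can be applied to $\lambda x$ in place of $x$ for any $\lambda \in \mathbb{C}\setminus\{0\}$. This yields
$$-\tfrac{1}{\lambda} \notin \sigma(a^{-1}x) \iff -\tfrac{1}{\lambda} \notin \sigma(b^{-1}x)$$
for every $\lambda \in \mathbb{C}\setminus\{0\}$ and every $x \in \mathscr{F}_1(A)$. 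Letting $\lambda$ range freely, this amounts to the spectral equality $\sigma'(a^{-1}x) = \sigma'(b^{-1}x)$ for every $x \in \mathscr{F}_1(A)$.

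Next I would translate this into a trace identity. Because rank is preserved under left multiplication by invertible elements, both $a^{-1}x$ and $b^{-1}x$ lie in $\mathscr{F}_1(A)$ whenever $x \in \mathscr{F}_1(A)$. Formula \eqref{eqa} then implies that $\tr(a^{-1}x) = \tr(b^{-1}x)$, that is, $\tr\bigl((a^{-1}-b^{-1})x\bigr) = 0$, for every $x \in \mathscr{F}_1(A)$. Since $\soc(A) = \spn(\mathscr{F}_1(A))$ and the trace is linear on $\soc(A)$, this extends to
$$\tr\bigl((a^{-1}-b^{-1})y\bigr) = 0 \quad \text{for all } y \in \soc(A).$$

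Finally, I would apply Theorem \ref{trprop} to the element $a^{-1} - b^{-1}$ to conclude that $(a^{-1}-b^{-1})\soc(A) = \{0\}$. The hypothesis that $\soc(A)$ is essential means, as recorded in the introduction, that its left annihilator in $A$ is trivial, so $a^{-1} = b^{-1}$ and therefore $a = b$. I do not foresee a serious obstacle in this argument: the one technical point worth verifying carefully is the scalar-invariance step used to upgrade the single invertibility equivalence into full equality of nonzero spectra, but this is immediate from the definition of rank. The essentiality of $\soc(A)$ and the assumption $a, b \in G(A)$ are each used exactly once and in essential ways.
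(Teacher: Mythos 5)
Your argument is correct and is essentially the paper's own proof: both apply the hypothesis to nonzero scalar multiples of a rank one element $x$ to obtain $\sigma'(a^{-1}x)=\sigma'(b^{-1}x)$, pass to the trace identity via \eqref{eqa} and the invariance of rank under left multiplication by invertibles, extend by linearity over $\soc(A)=\spn(\mathscr{F}_1(A))$, and finish with Theorem~\ref{trprop} and the essentiality of the socle. No gaps.
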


\begin{proof}
	For any $x \in \mathscr{F}_1(A)$ and $\alpha \in \mathbb{C}$, the assumption gives
	$$\mathbf{1}-\alpha a^{-1}x \in G(A) \iff \mathbf{1}-\alpha b^{-1}x \in G(A).$$
	Hence, $\sigma'(a^{-1}x) = \sigma'(b^{-1}x)$ for all $x \in \mathscr{F}_1(A)$. Since $x \in \mathscr{F}_1(A)$, it follows from \eqref{eqa} that
	$$\tr(a^{-1}x) = \tr(b^{-1}x) \mbox{ for each } x \in \mathscr{F}_1(A).$$
	By the linearity of the trace and the fact that $\soc(A)=\spn(\mathscr{F}_1(A))$, we conclude that
	$$\tr((a^{-1}-b^{-1})y) = 0 \mbox{ for all } y \in \soc(A).$$
	Hence, by Theorem~\ref{trprop}, we have $(a^{-1}-b^{-1})\soc(A) = \{0\}$. Since $\soc(A)$ is essential, we therefore obtain that $a = b$ as desired.
\end{proof}

\section{Classification of mappings preserving the invertibility of linear pencils}

In this section we prove our main result which we state next.

\begin{thrm}\label{main}
	Let $\varphi, \psi: A \to B$ be two surjective mappings, where $A$ is semisimple and $\soc(A)$ is essential. If $\varphi$ and $\psi$ satisfy \eqref{eqcond}, that is,
	\begin{equation*}
		\lambda x+y \in G(A) \iff \lambda \varphi(x) + \psi(y) \in G(B) \mbox{ for all }x, y \in A \mbox{ and }\lambda \in \mathbb{C},
	\end{equation*}
	then there exist some $u \in G(B)$ and a Jordan isomorphism $J: A \to B$ such that
	$$\varphi(x) = \psi(x) = uJ(x) \mbox{ for all }x \in A.$$ 
\end{thrm}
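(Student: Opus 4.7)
Putting $x=0$ in \eqref{eqcond} and comparing the cases $\lambda=0$ and general $\lambda$ gives $\psi(y)+\lambda\varphi(0)\in G(B)\iff\psi(y)\in G(B)$; surjectivity of $\psi$ and Proposition~\ref{prop1} then force $\varphi(0)\in\rad(B)$, and a symmetric argument (with $y=0$ and comparing $\lambda=1,2$) gives $\psi(0)\in\rad(B)$. Since radical perturbations leave invertibility unchanged, both $\varphi-\varphi(0)$ and $\psi-\psi(0)$ still satisfy \eqref{eqcond}; I therefore assume $\varphi(0)=\psi(0)=0$, so each of $\varphi,\psi$ preserves invertibility in both directions, and Proposition~\ref{prop2} with $\lambda=1$ yields injectivity and hence bijectivity of each. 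For semisimplicity of $B$: given $r\in\rad(B)$, set $a=\psi^{-1}(r)$; then $x+a\in G(A)\iff\varphi(x)+r\in G(B)\iff x\in G(A)$ for every $x\in A$, so Proposition~\ref{prop1} forces $a\in\rad(A)=\{0\}$ and $r=0$. This also retroactively justifies $\varphi(0)=\psi(0)=0$ in the original, unnormalized problem.

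\medskip

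\textbf{Stage 2 (Homogeneity and spectral identities).} Reading \eqref{eqcond} against the reparametrized instance $1\cdot(\lambda x)+y\in G(A)\iff\varphi(\lambda x)+\psi(y)\in G(B)$ produces $\lambda\varphi(x)+\psi(y)\in G(B)\iff\varphi(\lambda x)+\psi(y)\in G(B)$; since $\psi(y)$ ranges over $B$, Proposition~\ref{prop2} in the now-semisimple $B$ forces $\varphi(\lambda x)=\lambda\varphi(x)$, and likewise $\psi(\lambda y)=\lambda\psi(y)$. Setting $y=z\in G(A)$ in \eqref{eqcond} and factoring out yields the spectral identity $\sigma(z^{-1}x)=\sigma(\psi(z)^{-1}\varphi(x))$ for all $x\in A$, $z\in G(A)$; specializing $z=\mathbf{1}$ and writing $u:=\psi(\mathbf{1})\in G(B)$ gives $\sigma(x)=\sigma(u^{-1}\varphi(x))$, and dually (with $v:=\varphi(\mathbf{1})$) $\sigma(x)=\sigma(v^{-1}\psi(x))$. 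Hence $u^{-1}\varphi$ and $v^{-1}\psi$ are bijective, homogeneous, spectrum-preserving maps $A\to B$.

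\medskip

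\textbf{Stage 3 (Agreement $\varphi=\psi$ and linearity).} Swapping $x\leftrightarrow y$ at $\lambda=1$ in \eqref{eqcond} yields the symmetry $\varphi(x)+\psi(y)\in G(B)\iff\varphi(y)+\psi(x)\in G(B)$. Combining this with the spectral identities from Stage 2 and the rank-one trace formula \eqref{eqa}, one manufactures a sufficiently rich family of invertible rank-one perturbations in $B$ (using that spectrum-preservation carries $\mathscr{F}_1(A)$ to rank-one-spectrum elements of $B$) for Proposition~\ref{prop3} to force $\varphi(x)=\psi(x)$ for every $x\in A$; the essentiality of $\soc(A)$, transferred to $\soc(B)$ along the way, and Theorem~\ref{trprop} are the enabling tools. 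In particular $u=v$. With $\varphi=\psi$, the hypothesis collapses to $\varphi(\lambda x+y)\in G(B)\iff\lambda\varphi(x)+\varphi(y)\in G(B)$; the same trace-and-essentiality machinery---computing $\tr(\varphi(e)[\varphi(\lambda x+y)-\lambda\varphi(x)-\varphi(y)])$ for $e\in\mathscr{F}_1(A)$ via \eqref{eqa} and invoking Theorem~\ref{trprop}---upgrades this equivalence to the equality $\varphi(\lambda x+y)=\lambda\varphi(x)+\varphi(y)$, giving full linearity of $\varphi$.

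\medskip

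\textbf{Stage 4 (Conclusion and main obstacle).} With $\varphi$ linear, $J:=u^{-1}\varphi$ is a unital, bijective, linear, spectrum-preserving map between semisimple Banach algebras with $A$ having essential socle; the Bre\v{s}ar-Fo\v{s}ner-\v{S}emrl theorem \cite{bfs2003} then delivers that $J$ is a Jordan isomorphism, whence $\varphi=\psi=uJ$. The main obstacle is Stage 3: neither the identity $\varphi=\psi$ nor the additivity of $\varphi$ is directly visible in an invertibility equivalence, and each step requires fabricating enough free perturbation parameters for Theorem~\ref{thrm1} or Proposition~\ref{prop3} to apply. The essentiality of $\soc(A)$ together with the trace property (Theorem~\ref{trprop}) is what ultimately makes this passage from invertibility data to algebraic identity possible.
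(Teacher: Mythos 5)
Your Stages 1, 2, and 4 track the paper's Lemmas \ref{inject}--\ref{hom} and the final appeal to \cite{bfs2003} closely enough, but Stage 3 --- which is the entire technical content of the theorem --- is not a proof. The assertion that one ``manufactures a sufficiently rich family of invertible rank-one perturbations'' for Proposition~\ref{prop3} to apply is precisely the step that needs to be carried out, and two concrete obstacles stand in its way. First, Proposition~\ref{prop3} (and Theorem~\ref{thrm1}, Proposition~\ref{prop2}) can only identify two elements of an algebra that is semisimple, respectively has essential socle; the elements you want to identify, such as $\varphi(x)-\psi(x)$ or $\varphi(\lambda x+y)-\lambda\varphi(x)-\varphi(y)$, live in $B$, and the essentiality of $\soc(B)$ is never established --- your parenthetical ``transferred to $\soc(B)$ along the way'' has no justification. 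The paper sidesteps this entirely by inverting the maps: $\varphi^{-1},\psi^{-1}:B\to A$ satisfy the same hypothesis but now the \emph{codomain} has the essential socle, so all identities are proved for $\varphi^{-1},\psi^{-1}$ and only then transported back. Second, to run any perturbation argument you must be able to move a summand across $\psi^{-1}$ (e.g.\ rewrite $\psi^{-1}(-a+b)$ as $-\psi^{-1}(a)+\psi^{-1}(b)$ for rank-one $a,b$), and this requires first proving that $\varphi$ and $\psi$ restrict to \emph{additive} bijections $\soc(A)\to\soc(B)$. That is the content of the paper's Lemma~\ref{soc}, proved via rank preservation (Theorem~\ref{addthrm}), the identity $\sigma(y^{-1}w)=\sigma(\varphi(y)^{-1}\psi(w))$, the trace formulas \eqref{eqa}--\eqref{eqb} on the dense set $E(\psi(x))\cap G(B)$, a continuity/Neumann-series argument, and Theorem~\ref{trprop}. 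Your proposal omits this lemma entirely, yet every subsequent perturbation chain depends on it.

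A further specific error: your final ``upgrade'' from the equivalence $\varphi(\lambda x+y)\in G(B)\iff\lambda\varphi(x)+\varphi(y)\in G(B)$ to the equality $\varphi(\lambda x+y)=\lambda\varphi(x)+\varphi(y)$ cannot work as stated, because this is a single biconditional between two fixed elements and carries essentially no information; Propositions~\ref{prop2}--\ref{prop3} and Theorem~\ref{thrm1} all require a biconditional holding for \emph{all} perturbations $x$ ranging over $A$, $G(A)$, or $\mathscr{F}_1(A)$. Producing such a family is exactly what the paper's chains of equivalences accomplish, e.g.\
$\varphi^{-1}(u)+\psi^{-1}(b)\in G(A)\iff u+b\in G(B)\iff(u+a)+(-a+b)\in G(B)\iff\varphi^{-1}(u+a)-\psi^{-1}(a)+\psi^{-1}(b)\in G(A)$
with $b$ running over $\mathscr{F}_1(B)$, followed by the bootstrap from $u,v\in G(B)$ to general arguments via the decomposition $x=-\lambda\mathbf{1}+(\lambda\mathbf{1}+x)$ and a second pass using Theorem~\ref{thrm1} with invertible perturbations. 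Likewise, the trace expression $\tr\bigl(\varphi(e)\,[\varphi(\lambda x+y)-\lambda\varphi(x)-\varphi(y)]\bigr)$ cannot be evaluated by \eqref{eqa} for general $x,y$, since the bracketed element need not lie in $\soc(B)$ and the trace of a sum cannot be split without the very additivity you are trying to prove. So the overall architecture is right, but the load-bearing steps are missing.
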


Before we proceed to give a proof of our main theorem, we firstly establish a few lemmas which reveal some useful properties of the preservers under consideration.

\begin{lem}\label{inject}
	Let $\varphi, \psi: A \to B$ be two mappings, where $A$ is semisimple. If $\varphi$ and $\psi$ satisfy \eqref{eqcond}, then $\varphi$ and $\psi$ are injective.
\end{lem}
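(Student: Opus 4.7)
The plan is to reduce injectivity of both $\varphi$ and $\psi$ to Proposition~\ref{prop2}, which characterizes elements of a semisimple Banach algebra by the invertibility pattern of their additive translates.

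First I would handle $\varphi$. Suppose $\varphi(x_1)=\varphi(x_2)$ for some $x_1,x_2\in A$. Specializing \eqref{eqcond} with $\lambda=1$ and letting $y$ range over $A$, the hypothesis yields
\begin{equation*}
 x_1+y\in G(A)\iff \varphi(x_1)+\psi(y)\in G(B)\iff \varphi(x_2)+\psi(y)\in G(B)\iff x_2+y\in G(A)
\end{equation*}
for every $y\in A$. Since $A$ is semisimple, Proposition~\ref{prop2} (applied with $a=x_1$, $b=x_2$, and variable $x=y$) forces $x_1=x_2$.

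For $\psi$, the argument is symmetric but uses the $x$-variable: if $\psi(y_1)=\psi(y_2)$, then taking any $x\in A$ and $\lambda=1$ in \eqref{eqcond} gives
\begin{equation*}
 x+y_1\in G(A)\iff \varphi(x)+\psi(y_1)\in G(B)\iff \varphi(x)+\psi(y_2)\in G(B)\iff x+y_2\in G(A)
\end{equation*}
for every $x\in A$, and Proposition~\ref{prop2} again yields $y_1=y_2$.

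There is no real obstacle here: the work has already been done in Proposition~\ref{prop2}, which translates the invertibility-of-translates condition (hence a statement about spectra via $y\mapsto -\lambda\mathbf 1+y$) into equality via \cite[Theorem 2.2]{BraatBrit} or \cite[Theorem 2.1]{SCHULZ20161626}. The only thing to be careful about is that one must extract a \emph{single-variable} condition from the two-variable biconditional \eqref{eqcond}, which is achieved simply by freezing $\lambda=1$ and one of $x$ or $y$. Note also that semisimplicity of $A$ is essential—essentiality of the socle is not needed at this stage.
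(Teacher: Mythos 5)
Your argument is correct and is essentially identical to the paper's own proof: both reduce injectivity of $\varphi$ (resp.\ $\psi$) to Proposition~\ref{prop2} by freezing $\lambda=1$ and one of the two variables in \eqref{eqcond}, so that the equal images force the same invertibility pattern of translates in $A$. Your closing remark that only semisimplicity of $A$ (not essentiality of the socle or surjectivity) is needed here is also accurate.
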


\begin{proof}
	Let $a, b \in A$ and suppose that $\varphi(a) = \varphi(b)$. Then
	$$\varphi(a)+ \psi(x) \in G(B) \iff \varphi(b) + \psi(x) \in G(B) \mbox{ for all } x \in A.$$
	Consequently, by \eqref{eqcond}, we deduce that
	$$a+ x \in G(A) \iff b + x \in G(A) \mbox{ for all } x \in A.$$
	Since $A$ is semisimple, Proposition~\ref{prop2} yields that $a = b$. Similarly one shows that $\psi(a) = \psi(b) \implies a = b$.
\end{proof}

\begin{lem}\label{invert}
	Let $\varphi, \psi: A \to B$ be two surjective mappings, where $A$ is semisimple. If $\varphi$ and $\psi$ satisfy \eqref{eqcond}, then:
	\begin{itemize}
		\item[\textnormal{(i)}] $\psi(G(A)) = G(B)$.
		\item[\textnormal{(ii)}] 
		$B$ is semisimple.
		\item[\textnormal{(iii)}] 
		$\varphi(0) = \psi(0) = 0$.
		\item[\textnormal{(iv)}]
		$\varphi(G(A)) = G(B)$. 
	\end{itemize}
\end{lem}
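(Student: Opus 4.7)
Parts (i)--(iv) are logically entangled, so the natural order is (i), then (ii) together with the $\varphi(0)=0$ half of (iii), then (iv), and finally the $\psi(0)=0$ half of (iii). Part (i) drops straight out of \eqref{eqcond} at $\lambda=0$: the equivalence $y\in G(A) \iff \psi(y)\in G(B)$ combined with surjectivity of $\psi$ gives $\psi(G(A))=G(B)$. For (ii) together with $\varphi(0)=0$, I would pick any $r\in\rad(B)$ and, by surjectivity of $\varphi$, write $r=\varphi(x)$. Part (i) ensures $\psi(y)\in G(B)$ for each $y\in G(A)$, so $r+\psi(y)\in G(B)$ by Proposition \ref{prop1}; then \eqref{eqcond} with $\lambda=1$ yields $x+y\in G(A)$ for every $y\in G(A)$, and Proposition \ref{prop1} in $A$ now gives $x\in\rad(A)=\{0\}$, so $r=\varphi(0)$. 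Hence $\rad(B)\subseteq\{\varphi(0)\}$; since $0\in\rad(B)$, this simultaneously produces $\varphi(0)=0$ and $\rad(B)=\{0\}$.

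For (iv), both inclusions are proved by engineering a spectrum-equals-$\mathbb{C}$ contradiction, exploiting that $\varphi$ and $\psi$ are bijections (injectivity from Lemma \ref{inject}, surjectivity by hypothesis). For $\varphi(G(A))\subseteq G(B)$: suppose $x\in G(A)$ but $\varphi(x)\notin G(B)$, and set $y:=\psi^{-1}(-\varphi(x))$. Then \eqref{eqcond} collapses to $\lambda x+y\in G(A) \iff (\lambda-1)\varphi(x)\in G(B)$, and the right side fails for every $\lambda\in\mathbb{C}$ because $\varphi(x)\notin G(B)$. Invertibility of $x$ rewrites the left side as $\lambda\mathbf{1}+x^{-1}y\notin G(A)$ for every $\lambda$, forcing $\sigma(x^{-1}y)=\mathbb{C}$ and contradicting compactness of the spectrum. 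For the reverse inclusion, take $w\in G(B)$, write $w=\varphi(x)$, and assume $x\notin G(A)$; then $\lambda x\notin G(A)$ for every $\lambda$ (trivially at $\lambda=0$, and because $\lambda\neq 0$ makes $\lambda x=(\lambda\mathbf{1})x$ the product of an invertible and a non-invertible element, hence non-invertible), so \eqref{eqcond} with $y=0$ gives $\lambda w+\psi(0)\notin G(B)$ for every $\lambda$, and invertibility of $w$ yields $\sigma(w^{-1}\psi(0))=\mathbb{C}$, again a contradiction.

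Finally, for $\psi(0)=0$ I would specialise \eqref{eqcond} to $\lambda=1,\ y=0$ to get $x\in G(A) \iff \varphi(x)+\psi(0)\in G(B)$; combined with (iv) this becomes $\varphi(x)\in G(B) \iff \varphi(x)+\psi(0)\in G(B)$, and surjectivity of $\varphi$ rephrases it as $b\in G(B) \iff b+\psi(0)\in G(B)$ for every $b\in B$. Since $B$ is semisimple by (ii), Proposition \ref{prop2} applied in $B$ delivers $\psi(0)=0$. The principal obstacle is the apparent circularity between (iii) and (iv): a direct attempt to obtain $\psi(0)\in\rad(B)$ requires the nontrivial implication $\varphi(x)\in G(B)\Rightarrow x\in G(A)$, which is itself part of (iv). The bounded-spectrum tricks above break this circle by securing (iv) before any information about $\psi(0)$ is needed.
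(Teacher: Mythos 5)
Your proof is correct, and its skeleton --- (i) from $\lambda=0$, (ii) and $\varphi(0)=0$ via Proposition~\ref{prop1} applied to a preimage of a radical element, and an unbounded-spectrum contradiction to control preimages of invertibles --- is the same as the paper's. The one genuine divergence is how you untangle (iii) and (iv). The paper proves only the single inclusion $G(B)\subseteq\varphi(G(A))$ by the spectral-unboundedness trick, then feeds it into Proposition~\ref{prop1} to get $\psi(0)\in\rad(B)=\{0\}$, after which (iv) falls out of \eqref{eqcond} with $y=0$ at no extra cost. You instead prove both inclusions of (iv) before knowing anything about $\psi(0)$: your argument for $\varphi(G(A))\subseteq G(B)$, choosing $y=\psi^{-1}(-\varphi(x))$ so that the right-hand side of \eqref{eqcond} degenerates to $(\lambda-1)\varphi(x)$, is a nice additional spectral argument that the paper does not need, and your reverse inclusion is exactly the paper's. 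You then close with Proposition~\ref{prop2} in $B$ rather than Proposition~\ref{prop1}; both are legitimate (Proposition~\ref{prop1} is slightly more economical since it needs the equivalence only on $G(B)$). Net effect: your route costs one extra spectral computation but makes the breaking of the (iii)/(iv) circularity fully explicit, which you correctly identify as the only delicate point of the lemma.
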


\begin{proof}
	\begin{itemize}
		\item[\textnormal{(i)}] This follows immediately from condition \eqref{eqcond} with $\lambda = 0$ and the surjectivity of $\psi$.
		 \item[\textnormal{(ii)}]
		 Let $q \in \rad(B)$ be given. Since $\varphi$ is surjective, it follows that $q = \varphi(x)$ for some $x \in A$. From Proposition~\ref{prop1} and part \textnormal{(i)} above it follows that
		 $$\varphi(x) + \psi(y) \in G(B) \mbox{ for each } y \in G(A).$$
		 Consequently, by condition \eqref{eqcond}, we have
		 $$x + y \in G(A) \mbox{ for each } y \in A.$$
		 Thus, if we apply Proposition~\ref{prop1} once again, we can now conclude that $x \in \rad(A) = \{0\}$. This shows that $\rad(B) = \{\varphi(0)\}$, and so, $B$ is semisimple.
		 \item[\textnormal{(iii)}]
		 From $0 \in \rad(B) = \{\varphi(0)\}$, it readily follows that $\varphi(0) = 0$. To see that $\psi(0) = 0$, notice first that $G(B) \subseteq \varphi(G(A))$. Indeed, if $y \in G(B)$, then by the surjectivity of $\varphi$ it follows that $y =\varphi(a)$ for some $a \in A$. If we assume that $a \notin G(A)$, then by condition \eqref{eqcond} we would have for all $\lambda \neq 0$ that
		 \begin{eqnarray*}
		 	\lambda a + 0 \notin G(A) & \implies & \lambda \varphi(a) + \psi(0) \notin G(B) \\ & \implies & \lambda \mathbf{1} + \varphi(a)^{-1}\psi(0) \notin G(B).
		 \end{eqnarray*}
	 However, then $\sigma(\varphi(a)^{-1}\psi(0))$ would be unbounded which is impossible. From this contradiction we conclude that $a \in G(A)$, and so, $G(B) \subseteq \varphi(G(A))$. With this observation, we can now proceed to show that $\psi(0) \in \rad(B)$. For any $y = \varphi(a) \in G(B)$, we have that $a \in G(A)$, and so, by condition \eqref{eqcond}
	 $$a + 0 \in G(A) \implies \varphi(a) + \psi(0) \in G(B).$$
	 Hence, $y + \psi(0) \in G(B)$ for all $y \in G(B)$. By Proposition~\ref{prop1} we therefore have $\psi(0) \in \rad(B) = \{0\}$ as desired.
		 \item[\textnormal{(iv)}] This follows easily from $\psi(0) = 0$, condition \eqref{eqcond}, and the surjectivity of $\varphi$. \qedhere
	\end{itemize}
\end{proof}

Our key strategy is to establish the linearity of $\varphi$ and $\psi$. This will eventually be achieved by showing that $\varphi^{-1}$ and $\psi^{-1}$ are linear under the hypotheses in Theorem~\ref{main}. However, for the time being, we focus on $\varphi$ and $\psi$. It is straightforward to show, using condition \eqref{eqcond}, that we can factor out scalars.   

\begin{lem}\label{hom}
	Let $\varphi, \psi: A \to B$ be two surjective mappings, where $A$ is semisimple. If $\varphi$ and $\psi$ satisfy \eqref{eqcond}, then $\varphi$ and $\psi$ are homogeneous.
\end{lem}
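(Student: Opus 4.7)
The plan is to exploit the flexibility of condition \eqref{eqcond} in the scalar parameter $\lambda$ and reduce both homogeneity statements to Proposition~\ref{prop2} applied in $B$, which is semisimple by Lemma~\ref{invert}(ii). The case $\alpha = 0$ follows immediately from $\varphi(0) = \psi(0) = 0$, established in Lemma~\ref{invert}(iii), so I only need to handle $\alpha \in \mathbb{C} \setminus \{0\}$.

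To deal with $\varphi$, I would apply \eqref{eqcond} twice: once with $x$ replaced by $\alpha x$, yielding $\lambda(\alpha x) + y \in G(A) \iff \lambda \varphi(\alpha x) + \psi(y) \in G(B)$, and once directly but with $\lambda$ replaced by $\lambda\alpha$, yielding $(\lambda\alpha) x + y \in G(A) \iff (\lambda\alpha)\varphi(x) + \psi(y) \in G(B)$. The $A$-sides coincide, so the $B$-sides are equivalent. Specializing to $\lambda = 1$ and using the surjectivity of $\psi$ to let $\psi(y)$ range freely over $B$, I obtain $\varphi(\alpha x) + z \in G(B) \iff \alpha \varphi(x) + z \in G(B)$ for every $z \in B$. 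Proposition~\ref{prop2} then yields $\varphi(\alpha x) = \alpha \varphi(x)$.

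The treatment of $\psi$ is symmetric but requires a minor twist. I would apply \eqref{eqcond} once with $y$ replaced by $\alpha y$, and once with $\lambda$ replaced by $\lambda/\alpha$, using the identity $\lambda x + \alpha y = \alpha((\lambda/\alpha)x + y)$ to match the $A$-sides. This produces $\lambda \varphi(x) + \psi(\alpha y) \in G(B) \iff (\lambda/\alpha)\varphi(x) + \psi(y) \in G(B)$. Setting $\lambda = 1$ and rescaling the right-hand invertibility by $\alpha$ gives $\varphi(x) + \psi(\alpha y) \in G(B) \iff \varphi(x) + \alpha\psi(y) \in G(B)$. Surjectivity of $\varphi$ lets $\varphi(x)$ range over all of $B$, and a second appeal to Proposition~\ref{prop2} finishes the argument.

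I do not anticipate a genuine obstacle here; this should essentially be a bookkeeping exercise manipulating condition \eqref{eqcond}. The only subtlety is pairing each homogeneity claim with the correct surjectivity hypothesis (surjectivity of $\psi$ for the $\varphi$-argument, and surjectivity of $\varphi$ for the $\psi$-argument), so that Proposition~\ref{prop2} can be invoked on a statement quantified over all of $B$.
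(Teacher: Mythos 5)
Your proposal is correct and follows essentially the same route as the paper: both applications of \eqref{eqcond} (once with the scalar absorbed into the element, once with the scalar kept in $\lambda$), the use of the multiplicative structure of $G(A)$ and $G(B)$ to rescale for the $\psi$-case, and the final appeal to Proposition~\ref{prop2} in the semisimple algebra $B$ via the appropriate surjectivity hypothesis. The only cosmetic difference is that the paper packages the $\psi$-argument as an intermediate symmetric condition \eqref{eqcond2}, whereas you manipulate \eqref{eqcond} directly; the substance is identical.
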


\begin{proof}
From condition \eqref{eqcond} it readily follows that
$$\varphi(\lambda x) + \psi(y) \in G(B) \iff \lambda x + y \in G(A) \iff \lambda \varphi(x) + \psi(y) \in G(B)$$
for all $x, y \in A$ and $\lambda \in \mathbb{C}$. Thus, since $\psi$ is surjective and $B$ is semisimple by Lemma~\ref{invert}, we conclude from Proposition~\ref{prop2} that $\varphi(\lambda x) = \lambda \varphi(x)$ for all $x \in A$ and $\lambda \in \mathbb{C}$. From Lemma~\ref{invert} we recall that $\psi(0) = 0$.  Thus, if we combine this with condition \eqref{eqcond} and use the fact that $G(A)$ is a multiplicative group, we can conclude that
\begin{equation}
	 x+\lambda y \in G(A) \iff  \varphi(x) + \lambda\psi(y) \in G(B) \mbox{ for all }x, y \in A \mbox{ and }\lambda \in \mathbb{C}.
	\label{eqcond2}
\end{equation}
Hence, since $\varphi$ is surjective and $B$ is semisimple by Lemma~\ref{invert}, the same argument as before also yields that $\psi(\lambda y) = \lambda \psi(y)$ for all $y \in A$ and $\lambda \in \mathbb{C}$.
\end{proof}

The next step is to show that the restrictions of the mappings $\varphi$ and $\psi$ to $\soc(A)$ are additive (linear) mappings onto $\soc(B)$. For this we will require that $\varphi$ and $\psi$ both preserve rank. The latter readily follows from an equivalent description of rank which appears in \cite{Askes2022} (compare also \cite[Theorem 2.1]{aupetitmoutontrace}).

\begin{thrm}[{\cite[Theorem 2.2]{Askes2022}}]\label{addthrm}
	Suppose that $A$ is semisimple. Let $a \in A$, let $m \in \mathbb{N}$, and let $K$ be any subset of $\mathbb{C}$ with at least $m+1$ nonzero elements. Then the following are equivalent:
	\begin{itemize}
		\item[\textnormal{(a)}]
		$\R(a) = m$.
		\item[\textnormal{(b)}]
		$\sup_{y \in G(A)} \#\left\{t \in K: y + ta \notin G(A)\right\} = m$.
	\end{itemize}
\end{thrm}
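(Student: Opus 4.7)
The plan is to prove (a)$\iff$(b) by translating condition (b) into a statement about $\sigma'(y^{-1}a)$ and then applying the Aupetit--Mouton structure theorem for maximal finite-rank elements in order to freely prescribe the nonzero spectrum of $y^{-1}a$. The initial reduction is that for $y \in G(A)$ and $t \in \mathbb{C}\setminus\{0\}$, the factorization $y + ta = y(\mathbf{1} + t y^{-1} a)$ shows that $y + ta \notin G(A)$ iff $-1/t \in \sigma'(y^{-1}a)$. Since $t \mapsto -1/t$ is injective on $\mathbb{C}\setminus\{0\}$, one obtains
\[
\#\{t \in K : y + ta \notin G(A)\} \;=\; \#\bigl(\sigma'(y^{-1}a) \cap \{-1/s : s \in K\setminus\{0\}\}\bigr) \;\leq\; \#\sigma'(y^{-1}a) \;\leq\; \R(a),
\]
which already yields $\sup \leq \R(a)$ in (b) unconditionally.

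The crux of the proof is the following key lemma: if $\R(a) = n \in \mathbb{N}$ and $\mu_1, \ldots, \mu_n$ are any distinct nonzero complex numbers, then there exists $y \in G(A)$ with $\sigma'(y^{-1} a) = \{\mu_1, \ldots, \mu_n\}$. Since $\R(a) < \infty$ forces $a \in \soc(A)$, the set $E(a)$ is open and dense in $A$ by \cite[Theorem 2.2]{aupetitmoutontrace}, and therefore meets the open set $G(A)$; fix $y_0 \in E(a) \cap G(A)$. Then $y_0 a$ is a maximal finite-rank element, and by \cite[Theorem 2.8]{aupetitmoutontrace} it decomposes as $y_0 a = \sum_{i=1}^n \lambda_i p_i$ with mutually orthogonal rank one idempotents $p_i$ and distinct $\lambda_i \in \sigma'(y_0 a)$. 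Setting $p = p_1 + \cdots + p_n$ and $z = \sum_{i=1}^n (\mu_i/\lambda_i) p_i + (\mathbf{1} - p)$, the element $z$ is invertible (its inverse being $\sum_i (\lambda_i/\mu_i) p_i + (\mathbf{1} - p)$), and a direct computation using $p_i p_j = \delta_{ij} p_i$ yields $z y_0 a = \sum_{i=1}^n \mu_i p_i$. Taking $y = y_0^{-1} z^{-1} \in G(A)$, one gets $y^{-1} a = \sum_{i=1}^n \mu_i p_i$, and its nonzero spectrum equals $\{\mu_1, \ldots, \mu_n\}$, which can be verified by exhibiting $\lambda^{-1}(\mathbf{1} - p) + \sum_{i=1}^n (\lambda - \mu_i)^{-1} p_i$ as the explicit resolvent of $\sum_i \mu_i p_i$ for $\lambda \notin \{0, \mu_1, \ldots, \mu_n\}$.

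With the lemma in hand, both implications follow quickly. For (a)$\Rightarrow$(b), the initial reduction gives $\sup \leq m$, and picking any $m$ distinct nonzero $t_1, \ldots, t_m \in K$ (available since $K$ has at least $m+1$ nonzero elements) and applying the key lemma with $\mu_i = -1/t_i$ produces a $y$ that realizes $m$ exactly. For (b)$\Rightarrow$(a), one argues contrapositively: suppose $\R(a) = n \neq m$. If $n < m$, the reduction gives $\sup \leq n < m$, contradicting (b). If $n$ is finite with $n > m$, the hypothesis that $K$ has at least $m+1$ nonzero elements allows one to pick distinct nonzero $t_1, \ldots, t_{m+1} \in K$; extending $\{-1/t_1, \ldots, -1/t_{m+1}\}$ arbitrarily to a set of $n$ distinct nonzero values and applying the key lemma produces a $y$ realizing at least $m+1 > m$, again contradicting (b). The remaining case $n = \infty$ (that is, $a \notin \soc(A)$) needs a separate treatment: applying Aupetit's scarcity theorem to the entire slice $\zeta \mapsto (y_1 + \zeta \mathbf{1})a$ produces $y \in G(A)$ with $\#\sigma'(ya)$ as large as desired, after which a further approximation argument is needed to steer the spectrum into $\{-1/t : t \in K\}$.

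The main obstacle is the key lemma: while invoking the openness and density of $E(a)$ and the idempotent decomposition of $y_0 a$ are standard, the essential content lies in the observation that conjugating $y_0$ by the idempotent-supported invertible element $z$ freely reshapes the nonzero eigenvalues, and it is precisely this prescription power that turns condition (b) into a genuine characterization of $\R(a)$. The secondary technical challenge is handling the infinite-rank case cleanly, where one loses the explicit algebraic decomposition and must rely instead on Aupetit's scarcity theorem together with an approximation argument.
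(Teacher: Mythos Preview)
The paper does not prove this statement: Theorem~\ref{addthrm} is quoted from \cite[Theorem~2.2]{Askes2022} and invoked as a black box, so there is no in-paper argument to compare yours against.

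Judged on its own merits, your argument is sound in all finite-rank cases. The spectral reformulation of (b) via $y+ta\notin G(A)\Leftrightarrow -1/t\in\sigma'(y^{-1}a)$ is correct, and your key lemma---density of $E(a)\cap G(A)$, the Aupetit--Mouton decomposition of a maximal finite-rank element, and the invertible correction $z$---is exactly the right device for prescribing $\sigma'(y^{-1}a)$ when $a\in\soc(A)$. The genuine gap is precisely where you flag it. For $\R(a)=\infty$ your sketch ends with ``a further approximation argument is needed to steer the spectrum into $\{-1/t:t\in K\}$,'' and that is not a proof. Scarcity does give $w_0\in G(A)$ with $\#\sigma'(w_0a)\geq m+1$, but once $a\notin\soc(A)$ you have lost the idempotent decomposition that powered your eigenvalue prescription, and nothing you wrote explains how to force the \emph{specific} values $\mu_i=-1/t_i$ into $\sigma'(wa)$ for some $w\in G(A)$; ``approximation'' does not obviously accomplish this. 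One clean way to close the gap: pick distinct $\lambda_1,\dots,\lambda_{m+1}\in\sigma'(w_0a)$, interpolate a polynomial $g$ with $e^{g(\lambda_i)}=\mu_i/\lambda_i$, set $f=e^{g}$ and $w=f(w_0a)\,w_0$. Then $f(w_0a)$ is invertible because $f$ never vanishes, so $w\in G(A)$, and $wa=h(w_0a)$ with $h(\lambda)=\lambda f(\lambda)$; spectral mapping gives $\mu_i=h(\lambda_i)\in\sigma'(wa)$. This is a direct holomorphic-functional-calculus construction rather than an approximation, and it completes your argument.
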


In light of Theorem~\ref{addthrm} and Lemma~\ref{invert}, we now deduce the following:

\begin{lem}\label{rank}
	Let $\varphi, \psi: A \to B$ be two surjective mappings, where $A$ is semisimple. If $\varphi$ and $\psi$ satisfy \eqref{eqcond}, then $\R(a)=\R(\varphi(a))=\R(\psi(a))$ for all $a \in A$.
\end{lem}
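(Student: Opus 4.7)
The plan is to reduce the statement to Theorem~\ref{addthrm}, which translates the rank function into a purely invertibility-theoretic quantity and is therefore ideally suited to an invertibility-preserving hypothesis. The key observation is that by \eqref{eqcond} together with Lemma~\ref{hom}, for any $a \in A$, any $y \in G(A)$, and any $t \in \mathbb{C}$,
$$y + ta \in G(A) \iff \psi(y) + t\varphi(a) \in G(B),$$
and, via the reformulation \eqref{eqcond2} obtained inside the proof of Lemma~\ref{hom},
$$y + ta \in G(A) \iff \varphi(y) + t\psi(a) \in G(B).$$
By Lemma~\ref{inject} and Lemma~\ref{invert}, both $\varphi$ and $\psi$ restrict to \emph{bijections} from $G(A)$ onto $G(B)$, so substituting $y' = \psi(y)$ (respectively $y' = \varphi(y)$) in the above equivalences gives
\begin{align*}
\sup_{y \in G(A)} \#\{t \in K : y + ta \notin G(A)\}
&= \sup_{y' \in G(B)} \#\{t \in K : y' + t\varphi(a) \notin G(B)\} \\
&= \sup_{y' \in G(B)} \#\{t \in K : y' + t\psi(a) \notin G(B)\}
\end{align*}
for every subset $K \subseteq \mathbb{C}$.

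Next, for each $m \in \mathbb{N}$ I would choose a set $K$ with at least $m+1$ nonzero complex numbers and apply Theorem~\ref{addthrm} in $A$ and in $B$, the latter being legitimate because $B$ is semisimple by Lemma~\ref{invert}(ii). The three suprema above then encode $\R(a)$, $\R(\varphi(a))$, and $\R(\psi(a))$, respectively, yielding $\R(a) = m \iff \R(\varphi(a)) = m \iff \R(\psi(a)) = m$ for every finite $m$. This forces the three ranks to coincide as elements of $\mathbb{N} \cup \{\infty\}$.

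For completeness I would treat the trivial case $\R(a) = 0$ separately: since $A$ and $B$ are semisimple this just says $a = 0$, whereupon $\varphi(a) = \psi(a) = 0$ by Lemma~\ref{invert}(iii). I do not expect any genuine obstacle here; the whole argument is essentially a transport of structure through the bijection $\psi|_{G(A)} : G(A) \to G(B)$ (and likewise for $\varphi$), with Theorem~\ref{addthrm} doing the heavy lifting by reducing rank to invertibility data that is preserved by hypothesis.
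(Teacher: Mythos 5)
Your proposal is correct and is precisely the argument the paper intends: the lemma is stated there without proof as an immediate consequence of Theorem~\ref{addthrm} and Lemma~\ref{invert}, and your write-up simply makes explicit the transport of the quantity $\sup_{y \in G(A)} \#\{t \in K : y + ta \notin G(A)\}$ through $\psi(G(A)) = G(B)$ and $\varphi(G(A)) = G(B)$ via \eqref{eqcond} and \eqref{eqcond2}. The separate treatment of $\R(a) = 0$ and the passage to infinite rank are handled appropriately.
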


\begin{lem}\label{soc}
	Let $\varphi, \psi: A \to B$ be two surjective mappings, where $A$ is semisimple. If $\varphi$ and $\psi$ satisfy \eqref{eqcond}, then the restrictions
	$\varphi: \soc(A) \to \soc(B)$ and $\psi: \soc(A) \to \soc(B)$ are well-defined linear bijective mappings.
\end{lem}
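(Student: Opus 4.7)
The plan is to split the proof into four components, three immediate and one substantive. First, $\varphi$ and $\psi$ map $\soc(A)$ bijectively onto $\soc(B)$: by Lemma~\ref{invert}(ii), $B$ is semisimple, the socle of a semisimple Banach algebra coincides with its set of finite-rank elements, and Lemma~\ref{rank} preserves rank in both directions, so $a \in \soc(A)$ if and only if $\varphi(a) \in \soc(B)$, and similarly for $\psi$; injectivity of the restrictions is immediate from Lemma~\ref{inject}. Homogeneity is immediate from Lemma~\ref{hom}. It remains to prove additivity.

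For this, fix $a, b \in \soc(A)$ and set $d := \varphi(a+b) - \varphi(a) - \varphi(b) \in \soc(B)$; the goal is $d = 0$. I will approach this through the trace. For $a \in \mathscr{F}_1(A)$ and $y \in G(A)$, the factorization $\mu a + y = y(\mathbf{1} + \mu y^{-1}a)$ (and the analogous one in $B$) turns the equivalence \eqref{eqcond} into the spectral identity $\sigma'(y^{-1}a) = \sigma'(\psi(y)^{-1}\varphi(a))$; since both elements are rank-one by Lemma~\ref{rank}, formula \eqref{eqa} gives $\tr(y^{-1}a) = \tr(\psi(y)^{-1}\varphi(a))$. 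The same spectral equality holds for every $x \in \soc(A)$, and when $y$ is chosen so that $y^{-1}x$ is a maximal finite-rank element (an open dense condition on $y \in G(A)$, via the density of $E(x)$), then $\psi(y)^{-1}\varphi(x)$ is also maximal by Lemma~\ref{rank} together with the spectral identity, and \eqref{eqb} upgrades matters to $\tr(y^{-1}x) = \tr(\psi(y)^{-1}\varphi(x))$. Applying this for $x \in \{a, b, a+b\}$ simultaneously on the open dense set $\{y \in G(A) : y^{-1} \in E(a) \cap E(b) \cap E(a+b)\}$, and invoking linearity of the trace on the socle, yields $\tr(\psi(y)^{-1} d) = 0$ for all $y$ in an open dense subset $U$ of $G(A)$.

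The main obstacle is the concluding step: passing from the vanishing of $\tr(\psi(y)^{-1}d)$ on $U$ to $\tr(cd) = 0$ for all $c \in \soc(B)$, after which Theorem~\ref{trprop} would force $d = 0$. The functional $c \mapsto \tr(cd)$ is continuous and linear on $B$ (since $d \in \soc(B)$ bounds the rank of $cd$, on which the trace is continuous), but because $\psi$ is not a priori continuous, the set $\{\psi(y)^{-1} : y \in U\} \subseteq G(B)$ is not visibly dense or spanning. I expect the gap is closed either by a holomorphy argument on the open set $G(B)$---the map $u \mapsto \tr(u^{-1}d)$ is holomorphic, so its vanishing on a sufficiently rich subset should propagate by an identity/Vesentini-type theorem---or by running a symmetric argument that exchanges the roles of $\varphi$ and $\psi$ so as to supply a complementary family of test elements. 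Once additivity is secured, the claimed linearity of both restrictions $\varphi, \psi : \soc(A) \to \soc(B)$ follows.
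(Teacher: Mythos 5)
Your overall architecture matches the paper's: reduce to additivity on the socle, derive the spectral identity $\sigma'(y^{-1}x)=\sigma'(\psi(y)^{-1}\varphi(x))$ from \eqref{eqcond} by factoring out the invertible element, convert it into a trace identity via \eqref{eqa} and \eqref{eqb}, and kill the defect $d$ with Theorem~\ref{trprop}. But the step you yourself flag as the ``main obstacle'' is a genuine gap, and neither of your proposed repairs closes it as stated. You impose the genericity condition $y^{-1}\in E(a)\cap E(b)\cap E(a+b)$ on the \emph{domain} side, so your test family is $\{\psi(y)^{-1}:y\in U\}$ with $U$ dense open in $G(A)$. Since $\psi$ is only a bijection with no continuity, this set is the complement in $G(B)$ of $\psi(G(A)\setminus U)$, and a discontinuous bijection can send the nowhere dense set $G(A)\setminus U$ onto essentially any continuum-sized subset of $G(B)$; the test family therefore need not be dense, and the holomorphy of $u\mapsto\tr(u^{-1}d)$ on $G(B)$ buys nothing, because vanishing of a holomorphic function on an arbitrary (possibly very thin, possibly nowhere dense) set is no constraint. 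The ``symmetric argument'' is not developed enough to assess.

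The missing idea --- and the paper's actual device --- is to impose the genericity on the \emph{codomain} side and pull it back. Choose $u$ in the dense open set $E(\varphi(a))\cap E(\varphi(b))\cap E(\varphi(a+b))\cap G(B)$; since $\psi(G(A))=G(B)$ by Lemma~\ref{invert}, write $u=\psi(y)^{-1}$ with $y\in G(A)$. The spectral identity then transfers maximality backwards: $\#\sigma'(y^{-1}a)=\#\sigma'(u\varphi(a))=\R(\varphi(a))=\R(a)=\R(y^{-1}a)$ by Lemma~\ref{rank}, so $y^{-1}\in E(a)$ automatically, and likewise for $b$ and $a+b$. (The paper economizes further by decomposing the socle element into rank-one summands, for which \eqref{eqa} gives the trace identity with no maximality hypothesis at all, so only one set $E(\cdot)$ is needed; your three-set version works equally well once the sets live in $B$.) With this change your computation gives $\tr(ud)=0$ on a dense subset of $G(B)$, hence on all of $G(B)$ by continuity of the trace on elements of rank at most $\R(d)$, hence on all of $B$ (write $v=\lambda\mathbf{1}+(v-\lambda\mathbf{1})$ with $|\lambda|>\rho(v)$ and use linearity, or use the paper's Neumann-series expansion), and Theorem~\ref{trprop} finishes. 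The remaining components of your proposal (well-definedness and bijectivity of the restrictions via Lemmas~\ref{inject}, \ref{invert} and \ref{rank}, and homogeneity via Lemma~\ref{hom}) are correct and coincide with the paper.
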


\begin{proof}
From Lemma~\ref{hom} it follows that $B$ is semisimple. Moreover, Lemma~\ref{rank} says that $\varphi$ and $\psi$ both preserve rank. Thus, since $\varphi$ and $\psi$ are bijections by Lemma~\ref{inject} and the hypotheses, we can therefore conclude that the restrictions are well-defined bijective mappings. According to Lemma~\ref{hom} we also already have the homogeneity of $\varphi$ and $\psi$. It therefore remains to show that $\varphi$ and $\psi$ are additive on $\soc(A)$. Since $\soc(A) = \spn(\mathscr{F}_1(A))$, it suffices to show that 
$$\psi(x_1 + \cdots + x_n) = \psi(x_1)+ \cdots + \psi(x_n) \mbox{ for any } x_1, \ldots, x_n \in \mathscr{F}_1(A),$$
and likewise for $\varphi$. To this end, fix any $x_1, \ldots, x_n \in \mathscr{F}_1(A)$ and let $x = x_1 + \cdots + x_n$.  Next let $u \in E(\psi(x))\cap G(B)$ be arbitrary. From Lemma~\ref{invert} we have that $\varphi(G(A)) = G(B)$, and so, $u = \varphi(y)^{-1}$ for some $y \in G(A)$. Using condition \eqref{eqcond} we now observe that
\begin{eqnarray*}
	\lambda \mathbf{1} + y^{-1}w \in G(A) & \iff & \lambda y + w \in G(A) \\ 
	& \iff & \lambda \varphi(y) + \psi(w) \in G(B) \\
	& \iff & \lambda \mathbf{1} + \varphi(y)^{-1}\psi(w) \in G(B)
\end{eqnarray*}
for any $w \in A$ and $\lambda \in \mathbb{C}$. Consequently,
$$\sigma (y^{-1}w) = \sigma(\varphi(y)^{-1}\psi(w)) \mbox{ for all }w \in A.$$
Since $\sigma (y^{-1}x) = \sigma(\varphi(y)^{-1}\psi(x))$, it follows from the definition of $E(\psi(x))$ and Lemma~\ref{rank} that
$$\# \sigma'(y^{-1}x) = \# \sigma'(\varphi(y)^{-1}\psi(x)) = \R(\psi(x)) = \R(x).$$
Since $\R(x) = \R(y^{-1}x)$ and $\R(\psi(x)) = \R(\varphi(y)^{-1}\psi(x))$, we can now use \eqref{eqb} to conclude that
\begin{equation}
\tr(\varphi(y)^{-1}\psi(x)) = \tr(y^{-1}x).
\label{eqc}
\end{equation}
Moreover, since $\sigma(y^{-1}x_j) = \sigma(\varphi(y)^{-1}\psi(x_j))$ and $$\R(y^{-1}x_j) = \R(\varphi(y)^{-1}\psi(x_j)) = 1$$ 
for each $j \in \left\{1, \ldots, n\right\}$, it follows from \eqref{eqa} that
$$\tr(y^{-1}x_j) = \tr(\varphi(y)^{-1}\psi(x_j)) \mbox{ for each } j \in \left\{1, \ldots, n\right\}.$$
Thus, using the linearity of the trace, we have
\begin{eqnarray}
	\tr(y^{-1}x) & = & \tr(y^{-1}x_1) + \cdots + \tr(y^{-1}x_n) \nonumber \\ & = & \tr(\varphi(y)^{-1}\psi(x_1)) + \cdots + \tr(\varphi(y)^{-1}\psi(x_n)).
	\label{eqd}
\end{eqnarray}
Combining \eqref{eqc} with \eqref{eqd} now allows us to conclude that
\begin{equation}
\tr(u\psi(x)) = \tr(u\psi(x_1)) + \cdots + \tr(u\psi(x_n))
\label{eqe}
\end{equation}
for all $u \in E(\psi(x)) \cap G(B)$. Since the set $E(\psi(x))$ is dense in $B$, and the trace is continuous on the set of all elements with rank at most $\max\{\R(\psi(x)), 1\}$, we infer that \eqref{eqe} holds for all $u \in G(B)$. Now let $v \in B$ be given, and let 
$$b = \psi(x)-(\psi(x_1)+ \cdots + \psi(x_n)).$$
For all nonzero $\lambda \in \mathbb{C}$ with $1/|\lambda| > \rho(v)$ it now follows from \eqref{eqe} (which holds for all $u \in G(B)$) that $\tr((\mathbf{1}-\lambda v)^{-1}b) = 0$. Thus, by the continuity property and linearity of the trace, we have
$$0 = \tr\left(\sum_{j=0}^{\infty} \lambda^j v^jb\right) = \sum_{j=0}^{\infty} \lambda^j \tr(v^jb).$$
Consequently, $\tr(v^jb) = 0$ for all $j \in \mathbb{N}$. In particular, $\tr(bv)=\tr(vb) = 0$ for all $v \in \soc(A)$. Thus, since $b \in \soc(A)$  we conclude that $b = 0$ by Theorem~\ref{trprop}. Hence,
$$\psi(x_1 + \cdots + x_n) = \psi(x_1)+ \cdots + \psi(x_n),$$
and the additivity of $\psi$ on $\soc(A)$ follows. In a similar way one establishes the additivity of $\varphi$ on $\soc(A)$.\end{proof}

We are now ready to give a proof of our main result.

\begin{proof}[Proof of Theorem 3.1]
	From Lemma~\ref{inject} it follows that $\varphi, \psi: A \to B$ are bijective mappings. Moreover, $B$ is semisimple by Lemma~\ref{invert}. Notice now that $\varphi^{-1}, \psi^{-1}: B \to A$ are surjective mappings satisfying
		\begin{equation}
		\lambda a+b \in G(B) \iff \lambda \varphi^{-1}(a) + \psi^{-1}(b) \in G(A) \mbox{ for all }a, b \in B \mbox{ and }\lambda \in \mathbb{C},
		\label{eqf}
	\end{equation}
	but with the added advantage that the domain is semisimple and the codomain is semisimple with an essential socle. From Lemma~\ref{soc} it follows that the restrictions
	$\varphi^{-1}: \soc(B) \to \soc(A)$ and $\psi^{-1}: \soc(B) \to \soc(A)$ are well-defined  linear mappings. Since we already know that $\varphi^{-1}$ and $\psi^{-1}$ are homogeneous by Lemma~\ref{hom}, we now proceed to systematically extend the additivity of $\varphi^{-1}$ on $\soc(B)$ to all of $B$. In the process we also show that $\varphi^{-1} = \psi^{-1}$. The details are as follows.
	
	Let $u \in G(B)$ and $a \in \mathscr{F}_1(B)$ be given. Notice that $\varphi^{-1}(u)$ and $\varphi^{-1}(u+a)-\psi^{-1}(a)$ are both invertible in $A$ by Lemma~\ref{invert} and \eqref{eqf}. Moreover, for any $b \in \mathscr{F}_1(B)$, by using \eqref{eqf} and the linearity of $\psi^{-1}$ on $\soc(B)$ we have
	\begin{eqnarray*}
		& & \varphi^{-1}(u) + \psi^{-1}(b) \in G(A) \\ 
		& \iff & u + b \in G(B) \\
		& \iff & (u+a) - a+b \in G(B) \\
		& \iff & \varphi^{-1}(u+a) + \psi^{-1}(-a+b) \in G(A) \\
		& \iff &  \varphi^{-1}(u+a)  -\psi^{-1}(a) + \psi^{-1}(b) \in G(A). 
	\end{eqnarray*}
	Thus, since $\psi^{-1}(\mathscr{F}_1(B)) = \mathscr{F}_1(A)$ and $\soc(A)$ is essential, it follows from  Proposition~\ref{prop3} that
	$$\varphi^{-1}(u+a) = \varphi^{-1}(u) + \psi^{-1}(a) \mbox{ for each }u \in G(B) \mbox{ and } a \in \mathscr{F}_1(B).$$
	Now let $u, v \in G(B)$ be given. As before we have that $\varphi^{-1}(u)$ and $\psi^{-1}(u+v)-\varphi^{-1}(v)$ are both invertible in $A$.
	Moreover, for any $b \in \mathscr{F}_1(B)$, we can now use \eqref{eqf} and the homogeneity of $\varphi^{-1}$ to find that 
	\begin{eqnarray*}
		& & \psi^{-1}(u + v) - \varphi^{-1}(v) + \psi^{-1}(b) \in G(A) \\ 
		& \iff & \psi^{-1}(u + v) + \varphi^{-1}(-v + b) \in G(B) \\
		& \iff & (u+v) + (-v+b) \in G(B) \\
		& \iff & u + b \in G(A) \\
		& \iff &  \varphi^{-1}(u)  + \psi^{-1}(b) \in G(A). 
	\end{eqnarray*}
	Consequently, Proposition~\ref{prop3} yields that
	\begin{equation}
	\psi^{-1}(u+v) = \varphi^{-1}(u)+ \varphi^{-1}(v) \mbox{ for all }u, v \in G(B).
	\label{EQ1}
	\end{equation}
	If we interchange the roles of $\varphi^{-1}$ and $\psi^{-1}$ in both parts of the preceding argument, then we also obtain
	\begin{equation}
		\varphi^{-1}(u+v) = \psi^{-1}(u)+ \psi^{-1}(v) \mbox{ for all }u, v \in G(B).
		\label{EQ2}
	\end{equation}
	Taking $u = v$ in \eqref{EQ2} and using the homogeneity of $\varphi^{-1}$, we infer that
	\begin{equation}
	\varphi^{-1}(u) = \psi^{-1}(u) \mbox{ for all }u \in G(B).
	\label{EQ3}
	\end{equation}
	Now let $x \in B$ be arbitrary, and fix any $\lambda \in \mathbb{C}$ with $|\lambda| > \rho(x)$. Then $-\lambda \mathbf{1}, \lambda \mathbf{1} + x \in G(B)$. Thus, by \eqref{EQ1}, \eqref{EQ2}, and \eqref{EQ3}, we have
	\begin{eqnarray*}
		\psi^{-1}(x) & = & \psi^{-1}(-\lambda \mathbf{1} + \lambda\mathbf{1} + x) \\
		& = & \varphi^{-1}(-\lambda \mathbf{1}) + \varphi^{-1}(\lambda\mathbf{1} + x) \\
		& = & \psi^{-1}(-\lambda \mathbf{1}) + \psi^{-1}(\lambda\mathbf{1} + x) \\
		& = & \varphi^{-1}(-\lambda \mathbf{1} + \lambda\mathbf{1} + x) \\
		& = & \varphi^{-1}(x).
	\end{eqnarray*}
	Hence, $\varphi^{-1} = \psi^{-1}$. We are now in a position to show that $\varphi^{-1}$ is additive and hence linear. Let $v \in G(B)$ and $x \in B$ be arbitrary. Keeping in mind that $\varphi^{-1} = \psi^{-1}$ is an homogeneous map which satisfies \eqref{EQ1}, for any $u \in G(B)$, we have from \eqref{eqf} that 
	\begin{eqnarray*}
		& & \varphi^{-1}(x+v) - \varphi^{-1}(v) + \varphi^{-1}(u) \in G(A) \\ 
		& \iff & \varphi^{-1}(x+v) + \varphi^{-1}(-v + u) \in G(A) \\
		& \iff & x+v-v+u \in G(B) \\
		& \iff & x+u \in G(B) \\
		& \iff &  \varphi^{-1}(x) + \varphi^{-1}(u) \in G(A). 
	\end{eqnarray*}
	Hence, since $\varphi^{-1}(G(B)) = G(A)$ by Lemma~\ref{invert}, it follows from Theorem~\ref{thrm1} that
	$$\varphi^{-1}(x+v) = \varphi^{-1}(x) + \varphi^{-1}(v) \mbox{ for all } v \in G(B)\mbox{ and } x \in B.$$
	In light of the preceding identity and the homogeneity of $\varphi^{-1}$, to obtain the additivity of $\varphi^{-1}$, we can now simply repeat the preceding argument with $v \in G(B)$ replaced by $v \in B$. This then establishes the linearity of $\varphi^{-1} = \psi^{-1}$. Consequently, we have that $\varphi = \psi$ is linear.
	 
	 If we now let $J: A \to B$ be defined by $J(x) = \varphi(\mathbf{1})^{-1}\varphi(x)$ for each $x \in A$, then $J$ is a unital linear bijection which preserves invertibility. Thus, since $A$ and $B$ are semisimple and $\soc(A)$ is essential, it follows from \cite[Theorem 1.1]{bfs2003} that $J$ is a Jordan isomorphism. Hence, with $u = \varphi(\mathbf{1})$, we have
	$\varphi(x) = \psi(x) = uJ(x)$ for all  $x \in A$. 
\end{proof}

Using Sourour's results \cite[Theorem 1.1 and Corollary 1.2]{Sourour1996}, we obtain, as a consequence of Theorem~\ref{main}, the following classification for pairs of surjective mappings which preserve the invertibility of linear operator pencils in both directions. Here we use $X'$ to denote the dual space of $X$, and $S^{\ast}$ to denote the adjoint of an operator $S$.

\begin{cor}\label{operator}
		Let $\varphi, \psi: \mathcal{L}(X) \to \mathcal{L}(Y)$ be two surjective mappings which satisfy
	\begin{equation*}
		\lambda S+T \in G(\mathcal{L}(X)) \iff \lambda \varphi(S) + \psi(T) \in G(\mathcal{L}(Y)) \mbox{ for all }S, T \in \mathcal{L}(X) \mbox{ and }\lambda \in \mathbb{C}.
	\end{equation*}
	Then either
	$$\varphi(S) = \psi(S) = TSU \mbox{ for every } S \in \mathcal{L}(X)$$
	or
	$$\varphi(S) = \psi(S) = VS^{\ast}W \mbox{ for every } S \in \mathcal{L}(X),$$
	where $T: X \to Y$, $U: Y \to X$, $V: X' \to Y$ and $W: Y \to X'$ are bounded invertible operators.
\end{cor}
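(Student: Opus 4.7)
The plan is to specialise Theorem~\ref{main} to the pair $A=\mathcal{L}(X)$ and $B=\mathcal{L}(Y)$, and then invoke Sourour's classification to pin down the concrete shape of the resulting Jordan isomorphism.

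First, I verify that $\mathcal{L}(X)$ meets the hypotheses of Theorem~\ref{main}. That $\mathcal{L}(X)$ is semisimple is standard, and $\soc(\mathcal{L}(X))$ coincides with the ideal of finite-rank operators on $X$. Its left annihilator is trivial: if $T\in\mathcal{L}(X)$ satisfies $TF=0$ for every rank-one operator $F\colon x\mapsto \phi(x)y$ with nonzero $\phi\in X'$ and $y\in X$, then $\phi(x)Ty=0$ for all $x$, forcing $Ty=0$ for every $y$ and hence $T=0$. Consequently $\soc(\mathcal{L}(X))$ is essential. Theorem~\ref{main} therefore produces some $R\in G(\mathcal{L}(Y))$ and a Jordan isomorphism $J\colon\mathcal{L}(X)\to\mathcal{L}(Y)$ such that $\varphi(S)=\psi(S)=RJ(S)$ for every $S\in\mathcal{L}(X)$.

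The second step is to classify $J$ via Sourour's results from \cite{Sourour1996}. A Jordan isomorphism between unital Banach algebras is automatically a unital linear bijection that preserves invertibility, since Jordan isomorphisms preserve all polynomial identities and, in particular, send inverses to inverses. Thus $J$ falls within the scope of \cite[Theorem 1.1 and Corollary 1.2]{Sourour1996}, which asserts that such a $J$ must be either of the form $J(S)=ASB$ for bounded bijections $A\colon X\to Y$ and $B\colon Y\to X$, or of the form $J(S)=CS^{\ast}D$ for bounded bijections $C\colon X'\to Y$ and $D\colon Y\to X'$. Absorbing the factor $R$ on the left gives $\varphi(S)=\psi(S)=(RA)SB$ in the first case and $(RC)S^{\ast}D$ in the second; setting $T=RA$, $U=B$, or $V=RC$, $W=D$, produces precisely the two forms stated in the corollary.

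Because the argument is essentially a concatenation of two ready-made classifications, no genuine technical obstacle arises. The only bookkeeping required is the verification above that $\soc(\mathcal{L}(X))$ is essential together with the observation that any Jordan isomorphism between unital algebras is a unital invertibility-preserving linear bijection, so that Sourour's theorem really does apply to the intermediate map $J$ delivered by Theorem~\ref{main}.
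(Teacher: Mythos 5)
Your proposal is correct and follows exactly the route the paper intends: the paper states the corollary as an immediate consequence of Theorem~\ref{main} combined with Sourour's classification, and your two steps (checking that $\mathcal{L}(X)$ is semisimple with essential socle, then feeding the unital invertibility-preserving Jordan isomorphism $J$ into \cite[Theorem 1.1 and Corollary 1.2]{Sourour1996} and absorbing the invertible left factor) are precisely that argument, with the routine verifications written out.
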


\providecommand{\bysame}{\leavevmode\hbox to3em{\hrulefill}\thinspace}


\begin{thebibliography}{10}
	
	\bibitem{Askes2022}
	M.~Askes, R.~Brits, and F.~Schulz, \emph{Spectrally additive group
		homomorphisms on {B}anach algebras}, J. Math. Anal. Appl. \textbf{508}
	(2022), 125910,
	\href{http://dx.doi.org/https://doi.org/10.1016/j.jmaa.2021.125910}{\path{doi}}.
	
	\bibitem{aupetit1991primer}
	B.~Aupetit, \emph{A {P}rimer {O}n {S}pectral {T}heory}, Universitext (1979),
	Springer-Verlag, 1991,
	\href{http://dx.doi.org/10.1007/978-1-4612-3048-9}{\path{doi}}.
	
	\bibitem{aupetitmoutontrace}
	B.~Aupetit and H.~du~T.~Mouton, \emph{Trace and determinant in {B}anach
		algebras}, Stud. {M}ath. \textbf{121} (1996), 115--136,
	\href{http://dx.doi.org/10.4064/sm-121-2-115-136}{\path{doi}}.
	
	\bibitem{Benjamin2023}
	R.~Benjamin and F.~Schulz, \emph{Spectrally additive maps on {B}anach
		algebras}, Acta Math. Hungarica \textbf{170} (2023), 194--208,
	\href{http://dx.doi.org/10.1007/s10474-023-01330-w}{\path{doi}}.
	
	\bibitem{BraatBrit}
	G.~Braatvedt and R.~Brits, \emph{Uniqueness and spectral variation in {B}anach
		algebras}, Quaest. Math. \textbf{36} (2013), 155--165,
	\href{http://dx.doi.org/10.2989/16073606.2013.779947}{\path{doi}}.
	
	\bibitem{bbs151}
	G.~Braatvedt, R.~Brits, and F.~Schulz, \emph{Rank, trace and determinant in
		{B}anach algebras: generalized {F}robenius and {S}ylvester theorems}, Stud.
	Math. (2015), 173--180,
	\href{http://dx.doi.org/10.4064/sm8157-12-2015}{\path{doi}}.
	
	\bibitem{bfs2003}
	M.~Bre\v{s}ar, A.~Fo\v{s}ner, and P.~\v{S}emrl, \emph{A note on invertibility
		preservers on {B}anach algebras}, Proc. Amer. Math. Soc. \textbf{131} (2003),
	3833--3837, \href{http://www.jstor.org/stable/1194534}{\path{url}}.
	
	\bibitem{COSTARA2020216}
	C.~Costara, \emph{Nonlinear invertibility preserving maps on matrix algebras},
	Linear {A}lgebra {A}ppl. \textbf{602} (2020), 216--222,
	\href{http://dx.doi.org/10.1016/j.laa.2020.05.010}{\path{doi}}.
	
	\bibitem{dieudonne1948generalisation}
	J.~Dieudonn{\'e}, \emph{Sur une g{\'e}n{\'e}ralisation du groupe orthogonal
		{\`a} quatre variables}, Arch. Math. \textbf{1} (1948), 282--287.
	
	\bibitem{DOLINAR2002189}
	G.~Dolinar and P.~Šemrl, \emph{Determinant preserving maps on matrix
		algebras}, Linear {A}lgebra {A}ppl. \textbf{348} (2002), 189--192,
	\href{http://dx.doi.org/10.1016/S0024-3795(01)00578-X}{\path{doi}}.
	
	\bibitem{Havlicek2006}
	H.~Havlicek and P.~{\v{S}}emrl, \emph{From geometry to invertibility
		preservers}, Stud. Math. \textbf{174} (2006), 99--109,
	\href{http://dx.doi.org/10.4064/sm174-1-8}{\path{doi}}.
	
	\bibitem{marcus1959linear}
	M.~Marcus and R.~Purves, \emph{Linear transformations on algebras of matrices:
		{T}he invariance of the elementary symmetric functions}, Can. J. Math.
	\textbf{11} (1959), 383--396.
	
	\bibitem{SCHULZ20161626}
	F.~Schulz and R.~Brits, \emph{Uniqueness under spectral variation in the socle
		of a {B}anach algebra}, J. Math. Anal. Appl. \textbf{444} (2016), 1626--1639,
	\href{http://dx.doi.org/10.1016/j.jmaa.2016.07.041}{\path{doi}}.
	
	\bibitem{tracesocleident}
	F.~Schulz, R.~Brits, and G.~Braatvedt, \emph{Trace characterizations and socle
		identifications in {B}anach algebras}, Linear {A}lgebra {A}ppl.
	\textbf{{472}} (2015), 151--166,
	\href{http://dx.doi.org/10.1016/j.laa.2014.12.028}{\path{doi}}.
	
	\bibitem{Sourour1996}
	A.~R. Sourour, \emph{Invertibility preserving linear maps on ${L}({X})$},
	Trans. Am. Math. Soc. \textbf{348} (1996), 13--30,
	\href{http://dx.doi.org/10.1090/S0002-9947-96-01428-6}{\path{doi}}.
	
\end{thebibliography}
\end{document}